\documentclass[11pt,reqno]{amsart}
\usepackage{amssymb}
\usepackage{amsfonts}
\usepackage{amsthm}
\usepackage{amsmath}
\usepackage[pagebackref,hypertex]{hyperref} 
\usepackage{backref}

%
\newcommand{\R}{{\mathbb{R}}}

%

%

%
%

%
%
\newcounter{smalllist}

%
%

\numberwithin{equation}{section}
\allowdisplaybreaks

\newtheorem{thm}{Theorem}
\newtheorem{rem}[thm]{Remark}
\newtheorem{lemma}[thm]{Lemma}

\newtheorem{estimate}[thm]{Estimate}


\newcommand{\bbr}{\mathbb R}
\newcommand{\bbz}{\mathbb Z}

\newcommand{\calb}{\mathcal B}

\newcommand{\calf}{\mathcal F}
\newcommand{\calg}{\mathcal G}
\newcommand{\cali}{\mathcal I}

\newcommand{\calp}{\mathcal P}

\newcommand{\calr}{\mathcal R}
\newcommand{\calt}{\mathcal T}

\newcommand{\calu}{\mathcal U}

\newcommand{\one}{\mathbf{1}}



\newcommand{\topp}{\mathbf {top} }

\newcommand{\beq}{\begin{equation}}
\newcommand{\eeq}{\end{equation}}
\newcommand{\beqa}{\begin{eqnarray*}}
\newcommand{\eeqa}{\end{eqnarray*}}
\newcommand{\beqan}{\begin{eqnarray}}
\newcommand{\eeqan}{\end{eqnarray}}

\begin{document}

\title[Hilbert transform along one-variable vector fields   ] {$L^p$ estimates for the Hilbert transforms along a one-variable vector field}
\author{Michael Bateman \and Christoph Thiele}
\address{M. Bateman, Department of Mathematics, UCLA, Los Angeles, CA 90095-1555;
\email{bateman@math.ucla.edu}
\and
C. Thiele
, Department of Mathematics, UCLA, Los Angeles, CA 90095-1555;
\email{thiele@math.ucla.edu}\ 
}

\begin{abstract}
Stein conjectured that the Hilbert transform in the direction of a vector field is bounded on, say, $L^2$ whenever $v$ is Lipschitz.  We establish a wide range of $L^p$ estimates for this operator when $v$ is a measurable, non-vanishing, one-variable 
vector field in $\bbr ^2$.  Aside from an $L^2$ estimate following from a simple trick with Carleson's theorem, these estimates were unknown previously.  This paper is closely related to a recent paper of the first author (\cite{B2}).

\end{abstract}

\maketitle



\section{Introduction}

Given a non-vanishing measurable vector field $v \colon \bbr ^2 \rightarrow \bbr^2 $, define 
for $f \colon \bbr ^2 \rightarrow \bbr^2 $
\beqan\label{ht}
H_{v} f(x,y) = p.v. \int {{f((x,y)-tv(x,y)) } \over t}\, dt\ \ .
\eeqan
In this paper we prove: 
\begin{thm} \label{main}
Suppose $v$ is a non-vanishing measurable vector field such that
for all $x,y\in \bbr$ 
\beqa
v(x,y) = v(x,0)\ \ ,  
\eeqa
and suppose $p \in ({3\over 2} , \infty )$.  Then 
\beqa
||H_{v} f || _p \lesssim ||f||_p\ \ .
\eeqa
\end{thm}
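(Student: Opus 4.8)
\emph{Strategy.} The plan is first to reduce the theorem to a one-dimensional, single-frequency-annulus model. Since $H_v$ depends only on the direction of $v$, and since on the set where the first component of $v$ vanishes a routine argument shows the operator acts fiberwise as the classical Hilbert transform in $y$ (hence is trivially $L^p$-bounded there), we may assume $v(x,y)=(1,u(x))$ for a measurable $u\colon\bbr\to\bbr$. Because $u$ is independent of $y$, a partial Fourier transform in $y$ diagonalizes $H_v$: for each fixed $\eta$,
\beqa
\widehat{H_v f}(x,\eta)=\calc_{u(\cdot)\eta}\bigl[\widehat f(\cdot,\eta)\bigr](x),\qquad
\calc_N g(x)=\mathrm{p.v.}\!\int \frac{e^{-2\pi i tN(x)}}{t}\,g(x-t)\,dt,
\eeqa
so each $\calc_N$ is a Carleson-type operator whose modulation frequency $N(x)=u(x)\eta$ is dictated by the vector field. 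In particular $H_v$ commutes with the Littlewood--Paley projections $P_k$ in $y$, and the rescaling $y\mapsto 2^{-k}y$ (which only rescales $u$ and so stays inside our class with uniform constants) reduces everything to a \emph{single annulus} $|\eta|\sim 1$. By the standard Littlewood--Paley/square-function argument it then suffices to prove an $\ell^2$-valued, uniform-in-$k$ refinement of the single-annulus estimate $\|H_v P_k f\|_p\lesssim\|P_k f\|_p$, namely $\bigl\|(\sum_k|H_v P_k f|^2)^{1/2}\bigr\|_p\lesssim\bigl\|(\sum_k|P_k f|^2)^{1/2}\bigr\|_p$; for $p=2$ this is immediate from Plancherel in $y$ and the Carleson--Hunt theorem applied for each $\eta$, so the whole content is $p\neq 2$. (Alternatively one may run the phase-plane analysis below on all annuli at once, using that for a one-variable field the annuli essentially decouple.)

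For the single-annulus estimate I would carry out a time-frequency (phase-plane) analysis in the $x$-variable, in the style of Lacey--Thiele's proof of Carleson's theorem and Lacey--Li's work on Hilbert transforms along vector fields, and as in \cite{B2}. One decomposes the kernel $\mathrm{p.v.}\,1/t$ and the annulus into wave packets adapted to tiles $P=I_P\times\omega_P$, dyadic in position and $x$-frequency, with the vector field entering through the rule that a tile is \emph{active} over $I_P$ only when $\omega_P$ captures the frequency $N(x)=u(x)\eta$ for $x\in I_P$. The active tiles are organized into \emph{trees}; one proves the single-tree estimate (the wave packets on a fixed tree combine into essentially one modulated Hilbert transform, giving an $L^2$ bound controlled by the $\size$ of the tree), and then, for each pair of dyadic values of the $\size$ and $\dense$ parameters, bounds the number of trees that must be selected --- the \emph{counting/selection lemma}. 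Summing the tree estimates over this organization and optimizing the relation between the $\size$ and $\dense$ thresholds yields the $L^p$ bound, and $p=3/2$ is precisely the exponent at which the resulting geometric series over scales converges. It is cleanest to prove the combinatorial core in a Walsh/dyadic model (over a group such as $\fn$) and then transfer to the Fourier setting.

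The crux --- the only place where the one-variable hypothesis is used beyond the Fourier reduction --- is the counting lemma. For a general measurable field the tiles active over a common point can overlap wildly, and controlling that bush/Kakeya structure is exactly the obstruction to the Stein conjecture. Here, because $u=u(x)$ depends on a single variable, the family of relevant directions restricted to a fixed $x$-scale is a family of shears of one fixed configuration, and this yields a Besicovitch/C\'ordoba-type overlap bound (the geometric input of \cite{B2}, ultimately a one-dimensional covering estimate) strong enough to feed the counting lemma --- but with a loss that confines $p$ to $(3/2,\infty)$. Everything else --- the reductions, the single-tree estimate, the square-function passage, and the transference from the model --- is comparatively routine, so I expect the main work, and the main obstacle, to be the tree-counting estimate together with verifying that the resulting $L^p$ sum closes for all $p>3/2$.
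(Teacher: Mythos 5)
The reductions in your first paragraph match the paper: normalize to $v=(1,u(x))$, use the one-variable property to commute $H_v$ with the Littlewood--Paley projections $P_k$ in $y$, and reduce to the $\ell^2$-valued inequality $\|(\sum_k |H_k f_k|^2)^{1/2}\|_p \lesssim \|(\sum_k |f_k|^2)^{1/2}\|_p$. From there, however, the proposal misplaces the difficulty in two related ways. First, the single-annulus estimate is not the source of the exponent $3/2$: the companion paper \cite{B2} (and, for $p>2$, Lacey--Li) proves $\|H_vP_kf\|_p\lesssim\|P_kf\|_p$ for \emph{all} $1<p<\infty$, so "optimizing the relation between the size and density thresholds" in the single-annulus tree analysis does not produce a $3/2$ threshold. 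Second, and more seriously, the passage from the single-annulus bound to the vector-valued inequality is not "the standard Littlewood--Paley/square-function argument," and the annuli do not "essentially decouple": for $p\neq 2$ this passage is the entire content of the present paper and is exactly where the restriction $p>3/2$ enters. Your proposal dismisses it in one clause.

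What actually closes that step is restricted weak-type interpolation with exceptional sets. Given $G'$, $H'$, one constructs major subsets $G\subset G'$, $H\subset H'$ --- crucially independent of the annulus index $k$ --- by deleting unions of parallelograms of large density or large size, using two covering lemmas: a C\'ordoba--R.~Fefferman-type argument for the strong maximal function, and a Lacey--Li-type covering lemma (Theorem \ref{laceylict}) giving $|\bigcup_{R}R|\lesssim\delta^{-1}\sigma^{-2}|H|$. One then proves the localized $L^2$ bound $\|\one_G H_k(\one_H f)\|_2\lesssim(|G|/|H|)^{1/2-1/p}\|f\|_2$ by supplementing the tree, size, and density estimates of \cite{B2} with two new estimates (a second maximal estimate and a size restriction) that exploit the geometry of $G$ and $H$. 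The exponent $3/2$ is dictated by the $\sigma^{-2}$ power in the covering lemma; a power $1+\epsilon$ would yield $4/3$ (Remark \ref{othermaximal}). None of this machinery appears in your proposal, and without it the step from one annulus to the full operator does not close for $p\neq2$.
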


The estimate is understood as an a priori estimate for all $f$ in 
an appropriate dense subclass of $L^p(\bbr^2)$, say the Schwartz class, 
on which the Hilbert transform $H_v$ is initially defined. One can then 
use the estimate to extend $H_v$ to all of $L^p(\bbr^2)$.

If the vector field is constant, then this follows from classical estimates
for the one dimensional Hilbert transform by evaluating the $L^p$ norm as an iterated 
integral, with inner integration in the direction of the vector field.
Theorem \ref{main} follows from the special case for vector fields mapping to vectors of unit length, because 
the Hilbert transforms along $v$ and $\frac{v}{|v|}$ are equal
by a simple change of variables in (\ref{ht}).
To prove the theorem for unit length vector fields, it suffices to do so for
vector fields with non-vanishing first component, because we can apply the result for constant 
vector fields to the restriction of $H_v$ to the set where $v$ takes the value $(0,1)$ 
and the set where it takes the value $(0,-1)$. Dividing $v$ by its first
component we may then assume it is of the form $(1,u(x))$; note that multiplying $v$ by a 
negative number merely changes the sign of (\ref{ht}). We call $u$ the slope of the vector field. 
The Hilbert transform (\ref{ht}) then takes the form
\beqan \label{hv}
H_{v} f(x,y) = p.v. \int {{f(x-t,y-t u(x)) } \over t} \, dt\ \ .
\eeqan


\subsection{Remarks and related work}

The case $p=2$ of Theorem \ref{main} is equivalent to the  Carleson-Hunt 
theorem in $L^2$. This observation is attributed (without reference) 
to Coifman in \cite{LL2} and to Coifman and El Kohen in \cite{CSWW}.
We briefly explain how to deduce Theorem \ref{main} for $p=2$ from the 
Carleson-Hunt theorem.
Denote by ${\calf}_2$ the Fourier transform in the second variable.  Then
we formally have for (\ref{hv}), ignoring principal value notation,
\beqa
\int e^{2\pi i \eta y} \int \calf_2 f(x-t,\eta) {e^{-2\pi i u(x)\eta t} \over t} dt\, d\eta \ \ . 
\eeqa
As the inner integral is independent of $y$, it suffices by Plancherel to prove
\beqa
\| \int \calf_2f(x-t,\eta) {e^{-2\pi i u(x)\eta t} \over t} dt\|_{L^2(x,\eta)} 
\lesssim \|\calf_2 f\|_2\ \ . 
\eeqa
Applying for each fixed $\eta$ the Carleson-Hunt theorem in the form 
\beqa
\| \int g(x-t) {e^{-2\pi i N(x) t} \over t} dt\|_2 \lesssim \|g \|_2\ \  ,
\eeqa
for $g\in L^2(\bbr)$ and measurable function $N$ proves
the desired estimate.

For any regular linear transformation of the plane we have the identity
$$(H_{T\circ v \circ T^{-1}} f)\circ T= H(f\circ T)\ \ .$$ 
The class of vector fields depending on the first variable is invariant
under linear transformations which preserve the vertical direction.
This symmetry group is generated by
the isotropic dilations
$$(x,y)\to ( \lambda x,\lambda y)\ \ ,$$ 
non-isotropic dilations
$$(x,y)\to (x,\lambda y)\ \ ,$$ 
and the shearing transformations
$$(x,y)\to (x,y+\lambda x)$$
for $\lambda\neq 0$.
By a simple limiting argument, it suffices to prove Theorem \ref{main}
under the assumption that $\|u\|_\infty$ is finite. By the above non-isotropic scaling
the operator norm is independent of $\|u\|_\infty$, and we may therefore
assume without loss of generality that 
\beqan \label{uassume}
\|u\|_\infty\le 10^{-2}\ \ .
\eeqan

Following general principles of wave packet analysis, it is 
natural to decompose $H_v$ into wave packet components, where
the wave packets are obtained from a generating function $\phi$
via application of elements of the symmetry group
of the operator.
These wave packets can be visualized by acting with the same
group element on the unit square in the plane. The shapes obtained
under the above linear symmetry group of $H_v$ are parallelograms with a 
pair of vertical edges. All parallelograms in this paper will be of this
special type. Under the assumption (\ref{uassume}) it suffices
to consider parallelograms whose non-vertical edges are close to horizontal. 
Such parallelograms 
are well approximated by rectangles, which are used in \cite{B2} and previous
work by Lacey and Li \cite{LL2}.

The companion paper \cite{B2} proves the following theorem:
\begin{thm}\label{companion}
Assume $\|u\|_\infty\le 1$ and $1<p<\infty$. Assume $\widehat{f}(\xi,\eta)$
vanishes outside an annulus $A<|(\xi,\eta)|\le 2A$ for some $A>0$. Then
\beqa
||H_v f || _p \lesssim ||f||_p\ \ .
\eeqa
\end{thm}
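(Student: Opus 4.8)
The plan is to run the time--frequency (wave packet) analysis developed by Lacey and Li \cite{LL2}, but to feed into it a sharper Kakeya-type covering estimate at the step that controls the overlap of wave packets. First I would normalize the annulus: conjugating $H_v$ by the isotropic dilation $(x,y)\mapsto(Ax,Ay)$ replaces $v$ by the admissible vector field $v(\cdot/A)$ (with the same slope bound) and rescales the annulus to $\{1<|(\xi,\eta)|\le 2\}$, so I may assume $A\sim 1$, hence $|\eta|\lesssim 1$. Passing to $g=\calf_2 f$, the formal symbol of $H_v$ is the $x$-dependent function $-i\pi\,\mathrm{sgn}(\xi+u(x)\eta)$, singular along the line $\xi+u(x)\eta=0$; I would split $\mathrm{sgn}$ into Littlewood--Paley pieces $\Delta_j$ supported where $|\xi+u(x)\eta|\sim 2^{-j}$, i.e.\ at distance $\sim 2^{-j}$ from that line. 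Since the line passes through the origin and so always meets the annulus while $|(\xi,\eta)|\le 2$ there, only boundedly many ``far'' pieces with $2^{-j}\gtrsim 1$ occur, and each such piece, written out as a kernel, is a unit-scale average of translations along $v$ and is bounded on every $L^p$ by an elementary single-scale maximal estimate; the content is the contribution of the pieces localized within $2^{-j}$ of the line as $j\to+\infty$.

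To discretize that contribution I would, at each scale $j$, partition the annulus into sectors $\omega$ of angular width $\sim 2^{-j}$, let $P_\omega$ be the Fourier projection onto the $2^{-j}\times 1$ slab over $\omega$, and resolve $P_\omega$ in space into wave packets adapted to the dual parallelograms. Doing this at all scales and collecting the pieces (and making the standard limiting reduction to finitely many of them) expresses the operator as a model sum
\[
f\ \longmapsto\ \sum_{P\in\mathbf P}\langle f,\phi_P\rangle\,\one_{E_P}\,\psi_P ,
\]
where $\mathbf P$ is a collection of parallelograms of the special (vertical-edged) type described above, each $P$ carrying a frequency sector $\omega_P$ inside the one fixed annulus and a dual spatial parallelogram $I_P$, the functions $\phi_P,\psi_P$ are $L^2$-normalized bumps adapted to $P$, and $E_P=\{x\in I_P:\ v(x)\ \text{points into}\ \omega_P\}$ is the set on which the vector field selects the tile $P$. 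The frequency localization is essential here: it confines every $\omega_P$ to a single annulus, so the frequency data of a tile reduces to one slope interval plus one scale, and it is what makes the almost-orthogonality below close up.

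I would then organize $\mathbf P$ into trees. A tree $T$ with top $P_T$ is a family of tiles whose sectors are nested inside $\omega_{P_T}$ and whose parallelograms lie in $I_{P_T}$; on a single tree the vector field acts as a measurable linearizing frequency $N(x)$, so the corresponding part of the model sum obeys a Carleson-type single-tree bound, of the schematic form
\[
\Big\|\sum_{P\in T}\langle f,\phi_P\rangle\,\one_{E_P}\,\psi_P\Big\|_{2}\ \lesssim\ \mathrm{size}(T)\,|I_{P_T}|^{1/2},
\]
with $\mathrm{size}(T)$ the $L^2$-energy of the coefficients over $T$ (and analogous $L^p$ versions). Sorting $\mathbf P$ into forests by a greedy selection --- first by decreasing size, then by decreasing ``density'', i.e.\ the fraction of $I_P$ effectively claimed by the sets $E_{P'}$ --- reduces the theorem to two counting statements: a Bessel-type bound on the number of large-size trees, and, crucially, a bound on the number and overlap of high-density trees, which amounts to a covering inequality
\[
\sum_{T\in\mathcal T}|I_{P_T}|\ \lesssim\ \Big|\bigcup_{T\in\mathcal T}I_{P_T}\Big|
\]
(up to losses harmless for $L^p$) for families $\mathcal T$ of parallelograms whose orientations are prescribed by the values of a measurable vector field. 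Proving this in the quantitative ``Lipschitz--Kakeya maximal function'' form is the heart of the matter and the step I expect to be the main obstacle: it is the only place where the planar geometry of a measurable vector field genuinely enters, and a weaker bound here is exactly what limited the range of $p$ in earlier work.

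Finally I would assemble the estimates: the single-tree bound together with almost-orthogonality between frequency-separated trees gives boundedness on $L^2$; running the same tree/forest organization against exceptional sets yields restricted weak-type inequalities on both sides of $p=2$, and interpolation together with a duality argument upgrades them to $\|H_vf\|_p\lesssim\|f\|_p$ for every $1<p<\infty$, with all constants controlled by the single annulus hypothesis.
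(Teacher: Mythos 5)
Be aware that this paper does not prove Theorem \ref{companion}: the statement is imported from the companion paper \cite{B2} (``The companion paper \cite{B2} proves the following theorem''), and the paper's only work with it is the limiting reformulation as Theorem \ref{companion2}. The range $p>2$ together with weak $(2,2)$ is attributed to Lacey and Li \cite{LL1} for arbitrary measurable vector fields, while the extension to all $1<p<\infty$ is the content of \cite{B2}. There is therefore no internal proof here to measure your sketch against.

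Taking \cite{B2} and \cite{LL1} as the benchmark, your outline captures the right architecture (partial Fourier transform in $y$, Littlewood--Paley decomposition near the singular line $\xi+u(x)\eta=0$, tiles on vertical-edged parallelograms, tree and forest selection by size and density, restricted weak-type interpolation), but it is a road map rather than a proof, and it stops at exactly the hard part. You flag the covering inequality for families of parallelograms whose slopes are dictated by the one-variable field as ``the heart of the matter'' and ``the main obstacle'' and then leave it unproved. That covering/maximal estimate is precisely the novel input of \cite{B2} (drawing on \cite{B1}, \cite{B3}) that pushes past $p=2$; without it, your argument reproduces only the Lacey--Li $p>2$ range, not Theorem \ref{companion}. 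A further caution about your final step: a ``duality argument'' does not transfer the $p>2$ bound to $p<2$ here, because the adjoint of $H_v$ is not again a directional Hilbert transform along a one-variable vector field (the slope gets evaluated at the translated point), so the $p<2$ bounds must be produced directly from the covering/density side. Note also that the covering result this paper \emph{does} prove, Theorem \ref{laceylict}, is a variant of \cite{LL3} used to build the exceptional sets $G$, $H$ for the proof of Theorem \ref{main}; it is not a substitute for the estimate your sketch is missing.
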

Actually, the theorem is stated there for functions such that $\widehat{f}$ vanishes outside a trapezoidal region inside an annulus, but this is inessential, as can be seen from the commentary below.  
This theorem is weaker than Theorem \ref{main} in the region
$p>3/2$ but stronger in the region $1<p\le 3/2$. 
The width of the annulus can be altered
by finite superposition of different annuli, at the expense of
an implicit constant depending on the conformal width of the annulus.
The case $p> 2$ and a weak type endpoint at $p=2$ of Theorem \ref{companion} 
are due to Lacey and Li \cite{LL1} , and hold
for arbitrary measurable vector fields.

We reformulate Theorem \ref{companion} in a form invariant 
under the above linear transformation group.
Note that the adjoint linear transformations of this group
leave the horizontal direction invariant.
\begin{thm}\label{companion2}
Assume $1<p<\infty$. Assume $\widehat{f} (\xi, \eta)$
is supported in a horizontal pair of strips  $A< |\eta| < 2A$ for
some $A>0$. Then
\beqa
||H_v f || _p \lesssim ||f||_p\ \ .
\eeqa
\end{thm}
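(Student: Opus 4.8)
The plan is to deduce Theorem \ref{companion2} directly from Theorem \ref{companion}. The point is that, once the vector field has been normalized, the frequency strip $\{A<|\eta|<2A\}$ splits into a bounded ``core'' on which the symbol of $H_v$ genuinely depends on $x$ and to which Theorem \ref{companion} (with its finite-superposition-of-annuli extension) applies, together with a ``tail'' on which the symbol does not see the vector field at all, so that $H_v$ there is merely a constant-coefficient Hilbert transform.

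First I would run the reductions of the introduction in the present setting. Splitting $\bbr$ into the set where the first component of $v$ vanishes --- on which $H_v$ agrees, up to a multiplicative sign depending on $x$, with the one-dimensional Hilbert transform in the second variable, which is bounded on $L^p(\bbr^2)$ --- and its complement, on which $H_v$ coincides with the Hilbert transform along the slope field $(1,u(x))$ with $u=v_2/v_1$, we may assume $v=(1,u(x))$ with $u$ measurable. By the same limiting argument as in the introduction we may assume $\|u\|_\infty<\infty$, and by conjugating with a non-isotropic dilation $(x,y)\mapsto(x,\lambda y)$, which replaces $u$ by $\lambda u$, carries each strip $\{A<|\eta|<2A\}$ to a strip of the same type, and leaves the operator norm of $H_v$ unchanged, we may assume $\|u\|_\infty\le 10^{-2}$ as in (\ref{uassume}). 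Finally, for Schwartz $g$ one has the (routinely justified) identity
\beqa
H_v g(x,y)=-\pi i\int\!\!\int\widehat g(\xi,\eta)\,e^{2\pi i(x\xi+y\eta)}\,\mathrm{sgn}\big(\xi+u(x)\eta\big)\,d\xi\,d\eta\ ,
\eeqa
obtained by inserting Fourier inversion into (\ref{hv}) and evaluating the principal-value integral in $t$; the only frequencies at which the symbol depends on $x$ are those where $\xi+u(x)\eta$ may vanish, which under (\ref{uassume}) lie in the cone $\{|\xi|\le 10^{-2}|\eta|\}$.

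Next I would split $f=f_0+f_1$ by a smooth Fourier cutoff in $\xi$ at scale $A$: fix $\chi\in C^\infty(\bbr)$ equal to $1$ on $|\xi|\le\tfrac1{10}A$ and supported in $|\xi|<\tfrac15A$, and set $\widehat{f_0}=\chi\widehat f$, $\widehat{f_1}=(1-\chi)\widehat f$, both Schwartz. The Fourier support of $f_0$ lies in $\{A<|\eta|<2A,\ |\xi|<\tfrac15A\}$, hence inside an annulus $A\le|(\xi,\eta)|\le 3A$ and so inside a bounded, scale-independent number of dyadic annuli; Theorem \ref{companion} together with the superposition remark following it gives $\|H_vf_0\|_p\lesssim\|f_0\|_p\lesssim\|f\|_p$. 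On the Fourier support of $f_1$ we have $|\xi|\ge\tfrac1{10}A>2\cdot10^{-2}A\ge|u(x)\eta|$ for every $x$, so there $\mathrm{sgn}(\xi+u(x)\eta)=\mathrm{sgn}(\xi)$; hence $H_vf_1$ is a fixed constant times the one-dimensional Hilbert transform applied in the first variable to $f_1$, and $\|H_vf_1\|_p\lesssim\|f_1\|_p\lesssim\|f\|_p$ by the one-dimensional theory and Fubini. The triangle inequality then yields $\|H_vf\|_p\lesssim\|f\|_p$.

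I do not anticipate a genuine analytic difficulty; the argument is essentially bookkeeping. The two points needing a little care are the justification of the symbol identity on the Schwartz class and --- more to the point --- the fact that the reduction to $\|u\|_\infty\le 10^{-2}$ is actually available: it is, precisely because the family of strips $\{A<|\eta|<2A\}$, $A>0$, is invariant under the non-isotropic dilations whereas the family of annuli is not. This is also why the theorem is stated with a strip rather than an annulus: the strip is the smallest region cut out by the $\eta$ variable alone that contains every dangerous cone $\{\xi=-u(x)\eta\}$ and is stable under the symmetry group, so that after normalizing the slope its dangerous part is bounded and falls under Theorem \ref{companion}, while its complement is inert.
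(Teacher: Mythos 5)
Your argument is correct, but it is not the route the paper takes. The paper deduces Theorem \ref{companion2} from Theorem \ref{companion} by a pure scaling limit: conjugating by the non-isotropic dilation $(x,y)\to(\lambda x,y)$ stretches the annulus in the $\xi$ direction until, as $\lambda\to 0$, it degenerates to the pair of strips $A<|\eta|<2A$, while the hypothesis $\|u\|_\infty\le \lambda^{-1}$ becomes void in the limit; there is no decomposition of $f$ at all. What you do instead is first normalize $\|u\|_\infty\le 10^{-2}$ using the other non-isotropic dilation $(x,y)\to(x,\lambda y)$ (legitimate precisely because, as you note, this dilation preserves the family of strips), and then split the strip into a bounded annular core, handled by Theorem \ref{companion} plus the finite-superposition remark, and a tail $|\xi|\gtrsim A$ on which $\mathrm{sgn}(\xi+u(x)\eta)=\mathrm{sgn}(\xi)$ so that $H_v$ collapses to $H_{(1,0)}$. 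That second observation is exactly the paper's identity (\ref{honezero}), which the paper uses only for the \emph{converse} implication (covering the annulus by strips plus two thin sectors near the $\xi$-axis); you have in effect run that mechanism backwards. The trade-off: the paper's limit argument is shorter but leaves implicit both the uniformity of constants under the degenerating dilation and the approximation of a general strip-supported $f$ by functions whose $\xi$-support fits inside a stretched annulus, whereas your argument is entirely quantitative at a single scale, at the price of invoking the preliminary reduction to bounded slope (itself a limiting argument, but one the paper already performs for Theorem \ref{main}) and of checking that the sharp-looking splitting $f=f_0+f_1$ is realized by $L^p$-bounded multipliers, which your smooth one-variable cutoff $\chi$ does. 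Both proofs are sound.
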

To deduce Theorem \ref{companion2} from Theorem \ref{companion}
we use the non-isotropic dilation $(x,y)\to (\lambda  x,y)$ to stretch the
annulus in $\xi$ direction until in the limit it degenerates to a pair of strips
$A<|\eta|<2A$. The restriction $\|u\|_\infty\le \lambda^{-1}$ becomes void in the limit
$\lambda\to 0$. This proves Theorem \ref{companion2}. 
For the converse direction  we use a bounded number of dilated strips to cover the annulus 
except for two thin annular sectors around the $\xi$-axis. It remains to prove
bounds on functions supported in these sectors. For fixed constant vector $v$, the operator 
$H_v$ is given by a Fourier multiplier which is constant on two half planes separated
by a line through the origin perpendicular to $v$. If $\|u\|_\infty\le 1$, then this line does 
not intersect the thin annular sectors, and we have with the constant vector field $(1,0)$:
\beq 
\label{honezero}
H_vf(x,y)=H_{(1,0)}f(x,y)\ \ .
\eeq 
But $H_{(1,0)}$ is trivially bounded and this completes the deduction of Theorem \ref{companion}
from Theorem \ref{companion2}.

Sharpness of the exponent ${3\over 2}$ in Theorem \ref{main} is not known. 
In Remark \ref{othermaximal} we mention a potential covering lemma that, when combined with the 
methods in this paper, would push the exponent down to ${4\over 3}$.  The truth of this covering lemma is 
unknown, however.  
If $f$ is an elementary tensor, $$f(x,y)=g(x) h(y)\ ,$$ then a similar calculation as above turns
$H_vf$ into
\beqa
\int \widehat{h}(\eta )e^{2\pi i \eta y} \int g(x-t) {e^{-2\pi i u(x)\eta t} \over t} dt\, d\eta \ \ . 
\eeqa
This expression can be read as 
a family of Fourier multipliers acting on $h$. Assuming the norm of $h$ is normalized
to $\|h\|_p=1$, we can estimate the last display by
$$\|\|\int g(x-t) {e^{-2\pi i u(x) \eta t} \over t} \, dt\|_{M^{p}(\eta)}\|_{L^p(x)}\ \ ,$$
where $M_p(\eta)$ denotes the operator norm of the Fourier multiplier acting on $L^p$. 
By scaling invariance of the multiplier norm, the factor $u(x)$ in the phase can
be ignored. As shown in (\cite{CRS}), multiplier norms are controlled by variation norms.
Hence we may estimate the last display by
$$\|\|\int g(x-t) {e^{-2\pi i \eta t} \over t} \, dt\|_{V^{r}(\eta)}\|_{L^p(x)}\ \ ,$$
provided $|\frac 12-\frac 1p|\le \frac 1r$. The bounds on the variation norm Carleson operator
in \cite{OSTTW} imply that for $p>{4\over 3}$ and $r>p'$ the last display is bounded by a constant 
times $\|g\|_p$. Hence the exponent in Theorem \ref{main} can be improved to ${4 \over 3}$ under the
additional assumption that the function $f$ is an elementary tensor.
The authors learned this argument from Ciprian Demeter. 
Related multiplier theorems in \cite{DLTT}, \cite{D} also show a phase transition at this exponent.

The Hilbert transform along a one variable vector field has previously 
been studied by Carbery, Seeger, Wainger, and Wright in \cite{CSWW}.
There boundedness in $L^p$ for $1<p$ is proved under additional
conditions on the vector field.

In a different direction, Stein has conjectured that a truncation of $H_v$ is bounded 
on $L^2$ under the assumption that  the two-variable vector field $v$ is Lipschitz with
sufficiently small Lipschitz constant depending on the truncation.  
Stein's conjecture is related to a well-known conjecture of Zygmund on the differentiation 
of Lipschitz vector fields.  Define 
\beqa
M_{v} 
f (x,y) = \sup _{0<L<1} {1\over {2L}} \int _{-L} ^L f((x,y) - v(x,y)t) dt .
\eeqa
Zygmund conjectured that $M_v$  is (say) weak-type (2,2) if $\|v\|_\infty$ is bounded and 
the Lipschitz norm $\|\nabla v\|_\infty$ is small enough.  
Proving a weak-type estimate on this operator would yield corresponding differentiation results analogous to the Lebesgue differentiation theorem, except the averaging takes place over line segments instead of balls.  Estimates on $M_v $ are unknown on any $L^p$ space, except for the trivial $p=\infty$ case, unless more stringent requirements are placed on $v$; for example, Bourgain \cite{BO} proved $M_{v} $ is bounded on $L^p$, $p>1$ when ${v} $ is real-analytic and the operator is restricted to a bounded domain. The corresponding result for the Hilbert transform is announced in \cite{SS}, although the $p=2$ case follows from work of Lacey and Li \cite{LL2}.
Previously the Hilbert transform case, in such a range of exponents,  was only known (\cite{CNSW})
under the additional assumption that no integral curve of the vector field forms a straight line.

There is some history of using singular integral and time-frequency methods to control positive maximal operators.  See Lacey's bilinear maximal theorem (\cite{L}) or 
the extension \cite{DLTT} of Bourgain's return times theorem by Demeter, Lacey, Tao, and the second author.

This paper is structured as follows: Section \ref{vvreduction} contains the main approach: a separation of frequency space into horizontal
dyadic strips and application of Littlewood-Paley theory in the second variable to reduce to some 
vector-valued inequality; this step uses 
the one-variable property of the vector field to ensure that the strips are invariant under $H_v$. This fact
was brought
to our attention by Ciprian Demeter. The vector-valued inequality is proved by restricted weak-type interpolation,
a tool that allows to localize the operator to some benign sets $G$ and $H$ and prove
strong $L^2$ bounds on these sets.

Section \ref{constructsets} gives the crucial construction of the sets $G$ and $H$ relying
on two covering lemmas. One is essentially an argument by Cordoba and R. Fefferman \cite {CF},
while the other is essentially an argument by Lacey and Li \cite{LL3}.

Section \ref{outline} outlines the proof of the $L^2$ bounds on the sets $G$ and $H$ using
time-frequency analysis as in \cite{B2}. The operator that we estimate at this point is a refinement of the operator in \cite{B2}. We refer to the decomposition of this operator in \cite{B2}
without recalling details. The terms in this decomposition satisfy Estimates \ref{orthogonalityct} 
through \ref{trivialsize}, which are also taken from \cite{B2}. To complete the 
proof of Theorem \ref{main}, we need the additional
Estimates \ref{secondmaximal} and \ref{sizerestriction}, which depend on the sets $G$ and $H$.
These additional estimates are proved in Section \ref{fillingdetails}, again with much 
reference to \cite{B2}.

Throughout the paper, we write $x\lesssim y$ to mean there is a universal constant $C$ such that $x\leq Cy$.  We write $x\sim y$ to mean $x\lesssim y$ and $y\lesssim x$.  We write $\one _E$ to denote the characteristic function of a set $E$.


\section{Reduction to estimates for a single frequency band} \label{vvreduction}

We fix the vector field $v$ with the normalization (\ref{hv}) and assume bounded slope as in (\ref{uassume}).
Let $P_c$ be the Fourier restriction operator to a double cone as follows:
$$\widehat{P_c f}(\xi,\eta)=
\one_{10|\xi|\le |\eta|}\widehat{f}(\xi,\eta)\ .$$
It suffices to estimate $H_vP_c$ in place of $H_v$ because, similarly to
(\ref{honezero}),
$$H_v(\one -P_c) f(\xi,\eta)=  H_{(1,0)}(\one -P_c)f(\xi,\eta)\ \ ,$$
due to the restriction on the slope of $v$.
Define the horizontal pair of bands
\beqa
B_k := \{ (\xi , \eta ) \in \bbr ^2 \colon |\eta| \in [2^{k} , 2^{k+{1\over 100}} ) \}\ \ ,
\eeqa
and define the corresponding Fourier restriction operator
$\widehat{ P_k f} = \one _{B_k} \hat{f}$.
Since the Hilbert transform in a constant direction is
given by a Fourier multiplier, and the vector field $v$ is
constant on vertical lines, we can formally write for a family
of multipliers parameterized by $x$:

$${H_v f}(x,y)= \int\int m_x (\xi,\eta)\widehat{f}(\xi,\eta)
e^{2\pi i (x\xi+y\eta)}\, d\xi d\eta \ \ .$$
Then it is clear that 
\beqa {H_v(P_k f)}(x,y) &=& \int   1_{[2^k,2^{k+{1\over 100}})}(\eta) e^{2\pi i y\eta}
[\int m_x (\xi,\eta)\widehat{f}(\xi,\eta)
e^{2\pi i x\xi} \, d\xi] d\eta \\
&=& P_k (H_v f)(x,y)
\ \ .
\eeqa
Define  
$$H_k:= P_k H_v P_c = P_k H_v P_c P_k\ \ .$$
Littlewood-Paley theory implies
\beqa
|| H_v P_c f || _p \lesssim || \left(  \sum _{k\in \bbz/100} |H_k f |^2 \right) ^{1\over 2} || _p \ \ ,
\eeqa
where here the summation is over integer multiples of ${1\over 100}$.
Using Littlewood-Paley theory once more, it suffices to prove
\beqa 
|| \left(  \sum _{k\in \bbz/100} |H_k (P_k f) |^2 \right) ^{1\over 2} || _p 
\lesssim 
|| \left(  \sum _{k\in \bbz/100} |P_kf |^2 \right) ^{1\over 2} || _p \ \ ,
\eeqa
which follows from the more general estimate
\beqa 
|| \left(  \sum _{k\in \bbz/100} |H_k f_k |^2 \right) ^{1\over 2} || _p 
\lesssim 
|| \left(  \sum _{k\in \bbz/100} |f_k |^2 \right) ^{1\over 2} || _p \ \ 
\eeqa
for any sequence of functions $f_k\in L^2$. 
By a limiting argument, it suffices to prove for all $k_0>0$ 
\beqan \label{vvinequality} 
|| \left(  \sum _{ 
|k|\le k_0} |H_k f_k |^2 \right) ^{1\over 2} || _p 
\lesssim 
|| \left(  \sum _{|k|\le k_0} |f_k |^2 \right) ^{1\over 2} || _p \ \ 
\eeqan
with implicit constant independent of $k_0$, where it is understood
that $k$ runs through elements of $\bbz/100$. Compare this inequality with
a vector valued Carleson inequality as in \cite{GMS}.

Theorem (\ref{companion2}) implies that $H_k$ is bounded in 
$L^p$ for $1<p<\infty$ for each $k$. In particular, (\ref{vvinequality}) 
is true for $p=2$ by interchanging the order of square summation and $L^2$ norm.

Note that $H_k$ is defined a priori on all of $L^p$ (by Theorem \ref{companion2}) and we may drop the assumption
that $f$ is in the Schwartz class.
By Marcinkiewicz interpolation for $l^2$ vector valued functions it suffices to prove 
for $G,H \subseteq \bbr ^2$ and $\sum _k | f_k |^2  \leq \one _H$:  
\beqan \label{restrictedweakvv}
| \langle \left(  \sum _{|k|\le k_0} | H_k f_k |^2 \right) ^{1\over 2}, \one _G \rangle | 
\lesssim |H| ^{1\over p} |G| ^{1- {1\over p} }\ \ .
\eeqan
By Lebesgue's monotone convergence theorem it suffices to prove
this under the assumption that $G$ is supported on a large square $[-N',N']^2$
as long as the implicit constant does not depend on $N'$. By another
limiting argument using crude estimates in case the sets $G$ and $H$ have large
distance it suffices to prove this
under the assumption that $H$ is supported in a much larger square $[-N,N]$,
again with bounds independent of $N$. Generalizing, we will only assume
both $G$ and $H$ are supported on the larger square.

Since we already have (\ref{restrictedweakvv}) for $p=2$, 
we immediately obtain this estimate for $p>2$ provided $|H|\lesssim |G|$ and for $p<2$ provided $|G|\lesssim |H|$.
By a standard induction on the ratio of $|H|$ and $|G|$, it then suffices to prove the following lemma.

\begin{lemma}\label{majorsubsetlemma} 
Let $G',H' \subset [-N,N]^2$ be measurable 
and let $\frac 32<p<\infty$.

If $p>2$ and $10|G'|< |H'|$ then there exists a subset $H\subset H'$ depending only on
$p$, $G'$, and $H'$ with $|H|\ge |H'|/2$ 
such that (\ref{restrictedweakvv}) holds with $G=G'$ and any
sequence of functions $f_k$ with $\sum_{|k|\le k_0} |f_k|^2\le \one_H$. 

If $p<2$ and $10|H'|< |G'|$ then there exists a subset $G\subset G'$ depending only on
$p$, $G'$, and $H'$ with $|G|\ge |G'|/2$ 
such that (\ref{restrictedweakvv}) holds with $H=H'$ and any
sequence of functions $f_k$ with $\sum_{|k|\le k_0} |f_k|^2\le \one_H$. 
\end{lemma}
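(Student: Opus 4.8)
The plan is to reduce Lemma~\ref{majorsubsetlemma} to the construction carried out in Section~\ref{constructsets}, and then feed those sets into the $L^2$ machinery of Section~\ref{outline}. First I would treat the two cases symmetrically: the case $p<2$ with $10|H'|<|G'|$ follows from the case $p>2$ with $10|G'|<|H'|$ by a duality argument, since $H_k$ has an adjoint $H_k^*$ of the same structural type (a Hilbert transform along the same one-variable vector field, localized to the band $B_k$, up to the harmless replacement $t\mapsto -t$), and the bilinear form in (\ref{restrictedweakvv}) is symmetric in $G$ and $H$ after passing to adjoints. So I would fix $p>2$, write $|G'|=\alpha$, $|H'|=\beta$ with $10\alpha<\beta$, and aim to find $H\subset H'$ with $|H|\ge|H'|/2$ such that
\beqa
\Big| \Big\langle \Big( \sum_{|k|\le k_0} |H_k f_k|^2 \Big)^{1/2}, \one_G \Big\rangle \Big| \lesssim \beta^{1/p}\alpha^{1-1/p}
\eeqa
for all $f_k$ with $\sum_k |f_k|^2\le\one_H$, where $G=G'$.

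The core idea is that since $\beta\gg\alpha$, we can afford to remove from $H'$ an exceptional set built from $G'$ which is large relative to $\alpha$ but still small relative to $\beta$ (hence losing only half of $H'$), and on the complement the relevant "bad" geometric configurations — parallelograms on which the wave packet sum concentrates near $G$ — are controlled. Concretely, I would invoke the two covering lemmas announced in the introduction: the Córdoba–R.~Fefferman-type argument to handle the part of the operator behaving like a maximal average (this produces an exceptional set of measure $\lesssim \alpha$, comfortably removable from $H'$), and the Lacey–Li-type argument to control the density/counting of tiles whose spatial support meets $G$ substantially. Setting $H:=H'\setminus(\text{exceptional set})$, one has $|H|\ge|H'|/2$ provided the implied constants times $\alpha$ are $\le\beta/2$, which holds after possibly inserting a harmless absolute constant into the "$10$" (or, since only a fixed ratio is needed, by the standard induction-on-ratio already invoked before the lemma). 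Then, decomposing $H_k$ into its wave packet pieces as in \cite{B2}, the contribution of each piece is estimated by combining Estimates~\ref{orthogonalityct}--\ref{trivialsize} (orthogonality, size, and tree estimates, which are uniform and require no information about $G,H$) with the two new Estimates~\ref{secondmaximal} and \ref{sizerestriction}, which encode precisely that on $H$ the maximal-type and size-type quantities are bounded in terms of $\alpha/\beta$; summing the geometric series in the scales of the decomposition yields the claimed bound with the correct powers of $\alpha$ and $\beta$, and crucially with a constant independent of $k_0$ because the per-band estimates are summed via the $\ell^2$ orthogonality built into (\ref{vvinequality}) rather than trivially over $|k|\le k_0$.

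The main obstacle I anticipate is not the organizational reduction but the interface between the square-function (vector-valued) structure on the left of (\ref{restrictedweakvv}) and the single-band time-frequency analysis of \cite{B2}: one must ensure that the $\ell^2$ sum over $k$ of the band operators can be estimated band-by-band after extracting the sets $G,H$, i.e.\ that the exceptional set works \emph{simultaneously} for all $|k|\le k_0$. This is where the one-variable hypothesis is used a second time (the bands $B_k$ are $H_v$-invariant, so there is no cross-band interaction), and where the constructions in Section~\ref{constructsets} must be arranged to be $k$-uniform — the covering lemmas are applied to geometric objects (parallelograms with vertical edges) whose shape does not depend on $k$, only their eccentricity does, and the key point is that the relevant maximal/density bounds are uniform in this eccentricity. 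Once that uniformity is in hand, the remaining work is the routine-but-lengthy summation of Estimates~\ref{orthogonalityct}--\ref{sizerestriction}, deferred to Sections~\ref{outline} and \ref{fillingdetails}. A secondary technical point is checking that the restriction to the large square $[-N,N]^2$, already reduced to before the lemma, is preserved by the set constructions and does not affect the uniformity of constants; this should be immediate since all estimates are local and scale-invariant.
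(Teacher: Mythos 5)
Your high-level organizational reduction (build exceptional sets via covering lemmas, feed them into the \cite{B2} wave-packet machinery, and split the rest across Sections~\ref{outline}--\ref{fillingdetails}) tracks the paper's structure. But the load-bearing first step — reducing the $p<2$ case to the $p>2$ case by duality — does not work, and this is a genuine gap.

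The claim that $H_k^*$ is ``a Hilbert transform along the same one-variable vector field, up to $t\mapsto -t$'' is false. Computing the adjoint: substituting $(s,w)=(x-t,y-tu(x))$ at fixed $t$ in $\langle H_v f,g\rangle$ gives
$$H_v^* g(s,w)=p.v.\int g\bigl(s+t,\,w+t\,u(s+t)\bigr)\,\frac{dt}{t}\,,$$
and the appearance of $u(s+t)$ (rather than $u(s)$) means the ``curve'' of integration through $(s,w)$ is not a line — $H_v^*$ is not a directional Hilbert transform along any vector field, let alone a one-variable one. The substitution $t\mapsto -t$ only produces $-H_v$, not $H_v^*$. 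So the whole duality reduction collapses: you cannot deduce the $p<2$ half of Lemma~\ref{majorsubsetlemma} from the $p>2$ half. The paper instead treats the two cases by two different set constructions with two different covering lemmas: $H\subset H'$ (for $p>2$) is obtained by deleting parallelograms with high $E(R)\cap G'$ density, controlled by the C\'ordoba--Fefferman-type Lemma~\ref{shadow}, while $G\subset G'$ (for $p<2$) is obtained by deleting parallelograms with \emph{simultaneously} high $|E(R)|/|R|$ and high $|H'\cap R|/|R|$, controlled by the Lacey--Li-type Theorem~\ref{laceylict}; these constructions feed into the genuinely different Estimates~\ref{sizerestriction} and~\ref{secondmaximal}, respectively. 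In your sketch you instead apply both covering lemmas to the single case $p>2$ and build one exceptional set from both; that conflates the two constructions and, together with the failed duality, leaves the $p<2$ case entirely unaddressed. A secondary inaccuracy: the exceptional set removed from $H'$ is not arranged to have measure $\lesssim |G'|$ — Lemma~\ref{shadow} with $\delta=C_\alpha(|G'|/|H'|)^{1-\alpha}$ and $q=1/(1-\alpha)$ gives $|H_i|\lesssim\delta^{-q}|G'|\sim C_\alpha^{-q}|H'|$, and it is the choice of $C_\alpha$ (not the disparity $10|G'|<|H'|$) that makes this $\le |H'|/4$.
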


For example in case $p>2$ and $10|G'|<|H'|$ we split $H'$ into $H$ and $H'\setminus H$
and apply the triangle inequality. On $H'\setminus H$ we apply the induction hypothesis, 
which yields an estimate better than the desired one by a factor $2^{-1/p}$ because of the 
size estimate for $H'\setminus H$. On $H$ we use the conclusion of the Lemma, which by choosing 
the induction statement properly we may assume to provide a bound no more than $1-2^{-1/p}$ times 
the desired bound.

By Cauchy Schwarz, (\ref{restrictedweakvv}) follows from
\beqa
\int \sum _{|k|\le k_0} 
| H_k f_k |^2 \one _G \lesssim |H|^{2\over p} |G| ^{1-{2\over p}}\ \ . 
\eeqa
This in turn follows from
\beqan\label{strongghfe}
\int \sum _{|k|\le k_0}  | H_k f_k |^2 \one _G 
\lesssim \left( {{|G|}\over {|H|}} \right) ^{1-{2\over p}} \int \sum _k  |f_k |^2 
\eeqan
by the assumption on the sequence $f_k$.
Now define the operator $H_{k,G,H}$ by
$$H_{k,G,H} f= \one_G  H_k (\one_H f)\ \ .$$
Then (\ref{strongghfe}) follows from the estimate
$$\|H_{k,G,H} f\|_2\lesssim \left( {{|G|}\over {|H|}} \right) ^{{1\over 2}-{1\over p}} \|f\|_2\ .$$
for any $f\in L^2$, and $|k|\le k_0$, 
 assuming the implicit constant does not depend on $k$ or $k_0$.
We will prove this $L^2$ estimate again by Marcinkiewicz interpolation
between weak type estimates. More precisely we will prove

\begin{thm}\label{firstrefinement}
Let $p$ be as in Theorem \ref{main} and let $G', H' \subseteq \bbr ^2$ be as in Lemma \ref{majorsubsetlemma}.
Then there are sets $G,H$ as in Lemma \ref{majorsubsetlemma} such that 
for any measurable sets $E, F \subset \bbr^2$ and each $|k|\le k_0$ we have
\beqan\label{firstrefin}
|\langle H_{k,G,H} \one _F , \one _E \rangle| \lesssim
\left( {{ |G| } \over {|H|} }\right) ^{{1\over 2}-{1 \over p}}  |F|^{{1\over 2} } |E|^{{1\over 2}}\ \ .\eeqan
\end{thm}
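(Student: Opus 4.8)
The plan is to reduce the single-band operator bound to the single-annulus results of the companion paper \cite{B2} and the covering lemmas announced in Section \ref{constructsets}, at which point the whole matter becomes a bookkeeping problem in time-frequency space. First I would fix $k$ and, using the non-isotropic dilation $(x,y)\to(x,2^{-k}y)$ together with the shearing and isotropic symmetries, reduce $H_k$ to the model case $k=0$, so that $f_k$ has Fourier support in the fixed pair of horizontal strips $1\le|\eta|<2^{1/100}$; this is legitimate since all the symmetries preserve the one-variable class and, crucially, leave the claimed inequality \eqref{firstrefin} invariant because $|G|/|H|$, $|F|$, $|E|$ are all ratios of measures that the relevant (measure-preserving in the $x$-direction, scaling in $y$) maps transform homogeneously. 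After this normalization the cone restriction $P_c$ forces the wave packets to live on parallelograms with non-vertical sides of slope $\lesssim 10^{-2}$, which by \eqref{uassume} are comparable to the rectangles of \cite{B2}; so we may import verbatim the wave packet decomposition $H_v\approx\sum_{P}$ of \cite{B2}, organized into trees, together with Estimates \ref{orthogonalityct}--\ref{trivialsize} governing the size and energy of tree collections.

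The heart of the argument is the construction of the exceptional sets $G\subset G'$ and $H\subset H'$. Here I would invoke the two covering lemmas: the Córdoba--R. Fefferman-type lemma of Section \ref{constructsets} to handle the geometry of overlapping parallelograms (this controls how many tiles of a given scale can pile up over a point of $G$, i.e.\ a Kakeya/maximal-function input), and the Lacey--Li-type lemma \cite{LL3} to produce, after discarding a small-measure bad set, a set $H$ on which the "size" of any tree of tiles is controlled by the density factor $(|G|/|H|)^{1/2-1/p}$; one then sets $G,H$ to be the original sets minus these bad pieces, noting $|H|\ge|H'|/2$, $|G|\ge|G'|/2$ as required. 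With $G,H$ fixed, the operator $H_{k,G,H}=\one_G H_k\one_H$ has its wave packet sum restricted to tiles that are "good" for both sets, and the improved Estimates \ref{secondmaximal} and \ref{sizerestriction} of Sections \ref{outline}, \ref{fillingdetails} become available: $\textbf{size}$ is bounded by $(|G|/|H|)^{1/2-1/p}$ and the maximal/counting function over $G$ gains the complementary power.

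To finish I would run the standard tree-selection (greedy) algorithm of time-frequency analysis: decompose the tiles appearing in $\langle H_{k,G,H}\one_F,\one_E\rangle$ into a union of trees sorted by dyadic size $2^{-n}$ and dyadic density $2^{-m}$, bound the contribution of a single tree of size $2^{-n}$ over a set of measure $\lesssim 2^{2n}\min(|F|,|E|)$ (or rather the appropriate bilinear version using both $|F|$ and $|E|$), sum over trees in a forest using the Bessel-type orthogonality Estimate \ref{orthogonalityct}, and then sum the geometric series over $n$ and $m$. The size and density bounds from the previous paragraph are exactly what make the double geometric series converge and produce the factor $(|G|/|H|)^{1/2-1/p}|F|^{1/2}|E|^{1/2}$; the restriction $p>3/2$ enters precisely as the condition that the exponents line up so that summation in the density parameter $m$ is summable (this is where the conjectural covering lemma of Remark \ref{othermaximal} would, if true, allow $p>4/3$ instead). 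The main obstacle is the second covering lemma and the resulting Estimate \ref{sizerestriction}: getting a genuine size gain of the full power $(|G|/|H|)^{1/2-1/p}$ on a majority subset $H$ — rather than merely an $L^2$-averaged statement — is the delicate point, and it is what forces the careful Lacey--Li-style selection of tiles that are sparse relative to $G$ and is the reason the sets $G,H$ must be constructed before, not after, the tree decomposition. Everything else is a routine, if lengthy, adaptation of the $\cite{B2}$ machinery.
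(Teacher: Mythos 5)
Your high-level strategy matches the paper's: construct exceptional subsets $G\subset G'$ and $H\subset H'$ via two covering lemmas, then run the \cite{B2} tree decomposition and win the extra factor $(|G|/|H|)^{1/2-1/p}$ from two new estimates that exploit the exceptional-set construction. However, the bookkeeping is inverted in a way that would stall the proof if carried out literally, and one key structural element is missing.

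First, the lemma-to-set assignment is swapped. In the paper, the C\'ordoba--R.~Fefferman-type covering lemma (Lemma \ref{shadow}) produces the set $H$: one removes from $H'$ the union of parallelograms $R$ whose density against $G'$ is large, namely $|E(R)\cap G'|\ge \delta|R|$ with $\delta\sim(|G'|/|H'|)^{1-\alpha}$, and Lemma \ref{shadow} shows this union has small measure. The Lacey--Li-type lemma (Theorem \ref{laceylict}) produces the set $G$: one removes from $G'$ the union of parallelograms $R$ with $|E(R)|/|R|$ and $|H'\cap R|/|R|$ simultaneously too large, and Theorem \ref{laceylict} (with the $\delta^{-1}\sigma^{-2}$ bound) shows the removed piece is small. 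You attribute $H$ to the Lacey--Li lemma and describe the C\'ordoba--Fefferman input as controlling tile overlap over $G$; both statements point the two arguments at the wrong target sets.

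Second, you compress the gain into a single claim that ``the size of any tree is controlled by $(|G|/|H|)^{1/2-1/p}$,'' but the actual mechanism is a pair of estimates used in disjoint regimes. For $p>2$ the removal of high-density parallelograms from $H'$ yields Estimate \ref{sizerestriction}, a \emph{vanishing} statement: $S_{\delta,\sigma}=0$ once $\sigma$ exceeds a threshold of the form $\sigma_1(\tilde\delta/\delta)^n$ with $\tilde\delta\sim(|G|/|H|)^{1-\alpha}$. For $p<2$ the removal from $G'$ yields Estimate \ref{secondmaximal}, a maximal-type bound $S_{\delta,\sigma}\lesssim |E|(|H|/|G|)^{1/2}\sigma^{-\epsilon}\delta^{-1/2-\epsilon}$. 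The proof then splits the $\delta$-sum at a threshold ($\delta_0$ or $\delta_1$) and combines each new estimate with the modified orthogonality, density, and maximal estimates to close the geometric series; the $p>3/2$ threshold emerges from the specific exponent arithmetic in the $p<2$ branch (inequality \eqref{efgh1}), not simply from ``summability in $m$.'' Your sketch does not register that there are two distinct regimes with two distinct new estimates, which is where the actual work of Sections \ref{outline}--\ref{fillingdetails} lives.

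Finally, a caution on your proposed rescaling to $k=0$: the paper does note that the proof of \eqref{firstrefin} for a single $k$ is insensitive to $k$, but it also stresses that $G$ and $H$ must be constructed \emph{once}, independent of $k$, since the same pair appears in the square-function inequality \eqref{strongghfe} summed over all $|k|\le k_0$. Rescaling by $k$ is harmless for verifying \eqref{firstrefin} at a single $k$ but must not be allowed to leak into the definition of $G$ and $H$; the paper handles this by building $G$ and $H$ out of a fixed grid of parallelograms $\calr_1\cup\calr_2$ of all scales down to $2^{-k_0}$ at once.
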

Note again that \cite{B2} proves 
\beqan\label{wtfe}
|\langle H_{k,G,H} \one _F , \one _E \rangle| \lesssim
|F|^{{1\over q} } |E|^{1- {1\over q}}
\eeqan
for all $1<q<\infty$.  The refinement we need here is the localization to $G$ and $H$, with corresponding improvement in the estimate.  We remark that the parameter $k$ is irrelevant in proving \eqref{firstrefin}, but it is crucial that the sets $H$ and $G$ be constructed independent of $k$.
By interpolating Theorem \ref{firstrefinement} with (\ref{wtfe}) for $q$
near $1$ and $\infty$ we obtain strong type estimates 
$$
|\langle H_{k,G,H} f, e \rangle| \lesssim
\left( {{ |G| } \over {|H|} }\right) ^{{1\over 2}-{1 \over r} }
\|f\|_q \|e\|_{q'}\ \ .$$
where $r$ is as close to $p$ as we wish and $q$ is in a small punctured 
neighborhood of $2$ whose size depends on $r$. Another interpolation allows $q$ to be $2$ as well,
and we obtain (\ref{strongghfe}) with power $r$ instead of $p$, which
is no harm since we seek an open range of exponents.  
We have thus reduced Theorem \ref{main} to Theorem \ref{firstrefinement}.


\section{Construction of the sets $G$ and $H$}\label{constructsets}

In this section we present the sets $G$ and $H$ of Lemma \ref{majorsubsetlemma}
 and prove the size estimates
$|G|\ge |G'|/2$ and $|H|\ge |H'|/2$.
Inequality (\ref{firstrefin}) will be proved in subsequent sections.

We work with two shifted dyadic grids on the real line,
\beqa
\cali_1&=&\{[2^k (n+\frac{(-1)^k}{3}), 2^k (n+1+\frac{(-1)^k}{3})): k,n\in \bbz\}\ \ , \\
\cali_2&=&\{[2^k (n-\frac{(-1)^k}{3}), 2^k (n+1-\frac{(-1)^k}{3})): k,n\in \bbz\}\ \ .
\eeqa
The exceptional sets will be the union of two sets:
\beqa 
H'\setminus H&=&H_1\cup H_2\ \ ,\\
G'\setminus G&=&G_1\cup G_2\ \ .
\eeqa
Fix $i\in \{1,2\}$. The sets $H_i$ and $G_i$ will be constructed using the grid $\cali_i$,
and we will prove $4|H_i|\le |H'|$ and $4|G_i|\le  |H'|$.

Given a parallelogram with two vertical edges, we define the height
$H(R)$ of the parallelogram to be the common length of the two vertical
edges. We define the shadow $I(R)$ to be the projection of $R$ onto
the $x$ axis. The central line segment of $R$ is the line 
segment which connects the midpoints of the two vertical edges. 
If a line segment can be written 
$$\{(x,y): x\in I(R):  y=ux+b\}\ \ ,$$
then we call $u$ the slope of the line segment.
For each parallelogram $R$ let $U(R)$ be the set of slopes
of lines which intersect both vertical edges. Note that
maximal and minimal slopes in $U(R)$ are attained by the diagonals
of the parallelogram. Hence $U(R)$ is an interval of length $2H(R)/|I(R)|$ centered
at the slope of the central line of $R$. 

For an interval $U$ and a positive number $C$ 
define $CU$ to be the interval with the same center but length
$C|U|$. If $R$ is a parallelogram, define 
$CR$ to be the parallelogram with the same central line 
segment as $R$ but height $CH(R)$ (this definition of $CR$ is used in
Section \ref{constructsets} only). Note that $CU(R)=U(CR)$. 
For an interval $I\subset I(R)$ define
\beqa
R_I = R \cap (I \times \bbr )\ \ .
\eeqa

Given $N$ and $k_0$ as in Lemma \ref{majorsubsetlemma}, 
we consider a finite set $\calr_i$ of parallelograms $R$ as follows:
the projection of both vertical edges of $R$ onto the  
$y$-axis are in $\cali_1\cup \cali_2$, and $I(R)\in \cali_i$. 
Further, the parallelogram is contained in the square $[-10^2N,10^2N]^2$,
the height is at least $2^{-k_0}$, and the slope is at most $10^{-1}$.
These assumptions imply also that $|I(R)|$ is also at least $2^{-k_0}$.

We will use the following simple geometric observation:
\begin{lemma}\label{7rlemma}
Let $R,R'$ be two parallelograms and assume
$I(R)= I(R')$,
$U(R)\cap U(R')\neq\emptyset$, $R\cap R'\neq \emptyset$,
and without loss of generality $H(R)\le H(R')$. Then we have
$R\subseteq 7R'$. Moreover, if  $7H(R)\le H(R')$, then
$7R\subseteq 7R'$.
\end{lemma}
\begin{proof}
Since $U(R)\cap U(R')\neq \emptyset$, there exist two parallel lines,
one intersecting both vertical edges of $R$ 
and the other intersecting both vertical edges of $R'$. Since $R\cap R'\neq\emptyset$,
the vertical displacement of these lines is less than $H(R)+H(R')$.  If $H(R)\le H(R')$,
then the vertical edges of $R$ have distance at most $2H(R')$ from
the respective vertical edges of $R'$ and are contained in the
vertical edges of $7R'$. This proves the first statement of the
lemma. The second statement follows similarly.
\end{proof}

Let $M_V$ denote the Hardy Littlewood maximal operator in vertical direction:
$$M_Vf(x,y)=\sup_{y\in J} \frac 1{|J|}\int_J |f(x,z)|\, dz\ \ ,$$
where the supremum is taken over all intervals $J$ containing $y$.
For a measurable function $u:\bbr \to \bbr$ (which will be the slope function
associated with the given vector field), define
$$E(R):=\{(x,y)\in R: u(x)\in U(R)\}\ \ .$$

\subsection{Construction of the set $H$}

With the sets $G',H'$ as in Lemma \ref{majorsubsetlemma}, we define
$$H_i=\bigcup \{R\in \calr_i: |E(R)\cap G'|\ge \delta |R| \}$$
with
$$\delta=C_\alpha\left({|G'|\over |H'|}\right)^{1-\alpha}$$
for some small $\alpha$ to be determined later through application
of Estimate \ref{sizerestriction}
and some constant $C_\alpha$ large enough so that the desired estimate 
$4|H_i|\le |H'|$ follows from
the following lemma, applied with $G=G'$, 
$q={1\over 1-\alpha}$.  Note that we are essentially eliminmating all rectangles $R$ with large density parameter, where density has the meaning from \cite{B2}.  This will be used in the proof of Estimate \ref{sizerestriction} later in the paper.  Essentially, trees with density $\geq \delta$ will have extremely small size, and will therefore be mostly negligible.

\begin{lemma}\label{shadow}
Let $\delta>0$ and $q>1$ and let $G\subset \bbr^2$ be a measurable set
and $u:\R\to \R$ be a measurable function.
Let $\calr$ be a finite collection of parallelograms with vertical
edges and dyadic shadow such that
$$|E(R)\cap G|\ge \delta |R| $$
for each $R\in \calr$. Then
$$\left|\bigcup_{R\in \calr}\right|\lesssim \delta^{-q}|G|\ \ .$$
\end{lemma}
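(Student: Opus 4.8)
The plan is to prove Lemma \ref{shadow} by a Córdoba--Rubio de Francia type covering/linearization argument adapted to parallelograms with prescribed slope behaviour, exploiting the extra gain hidden in the density hypothesis. Write $\Omega=\bigcup_{R\in\calr}R$. The key point is that the hypothesis $|E(R)\cap G|\ge\delta|R|$ forces, for each $R$, a substantial overlap between $R$ and the set $G$ \emph{restricted to the slice where the vector field's slope lies in} $U(R)$; this is a strong constraint because, after integrating in the vertical direction, it says that for an $x$-set of proportion $\gtrsim\delta$ in $I(R)$ the fibre $\{y:(x,y)\in R,\ u(x)\in U(R)\}$ meets $G$ in measure $\gtrsim\delta H(R)$. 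The first step is therefore to reduce to a maximal, pairwise-incomparable subcollection: by the dyadic structure of the shadows $I(R)\in\cali$ and the fact that $U(R)$ is an interval of length $2H(R)/|I(R)|$, two parallelograms with nested shadows and intersecting slope intervals that overlap are comparable in the sense of Lemma \ref{7rlemma}, so a greedy selection keeps a subcollection $\calr^*$ with $\Omega\subseteq\bigcup_{R\in\calr^*}7R$ and controlled overlap among the selected $R$ at each shadow scale.

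Next I would set up the $L^2$ (or $L^q$) duality/Rubio de Francia exponential-sum estimate. The standard move is: pick coefficients, estimate $\big\|\sum_{R\in\calr^*}\one_R\big\|_{q'}$ against $\#$ via an orthogonality/almost-disjointness input coming from the slopes, then use that $\delta|R|\le|E(R)\cap G|$ to write
\[
\delta\,|\Omega|\ \lesssim\ \delta\sum_{R\in\calr^*}|7R|\ \lesssim\ \sum_{R\in\calr^*}|E(R)\cap G|\ =\ \int_G \sum_{R\in\calr^*}\one_{E(R)}\,.
\]
Then apply Hölder with exponents $q,q'$ to bound the right side by $|G|^{1/q}\big\|\sum_{R}\one_{E(R)}\big\|_{L^{q'}}$, and the remaining task is the bound $\big\|\sum_{R\in\calr^*}\one_{E(R)}\big\|_{q'}\lesssim |\Omega|^{1/q'}$, i.e.\ a bounded-overlap statement for the sets $E(R)$. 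The crucial geometric fact making this work is that $E(R)$ is the part of $R$ lying over the $x$-set $\{u\in U(R)\}$, so two of these overlapping forces both $R\cap R'\ne\emptyset$ and $u(x)\in U(R)\cap U(R')$ at a common $x$, hence (by Lemma \ref{7rlemma}, applied after the scale reduction) $R$ and $R'$ are nested up to dilation; a Córdoba--Fefferman iteration on the dyadic scales of $H(R)$ then gives the overlap bound with a loss that is only logarithmic in the scales, which is absorbed by passing from $q'=\infty$-type estimate to any finite $q'$, i.e.\ by spending an $\eps$ of exponent — this is exactly why the conclusion is stated with an arbitrary $q>1$ and an implicit constant depending on $q$ rather than with $q=1$.

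The main obstacle I expect is precisely this overlap estimate for the $E(R)$: one cannot have genuine bounded overlap of the parallelograms $R$ themselves (that would be false even for the classical Kakeja/Besicovitch configurations), so everything hinges on using the slope restriction $u(x)\in U(R)$ to decouple parallelograms of very different eccentricities. Concretely, at a fixed $x$, the intervals $U(R)$ containing $u(x)$ are nested dyadic-type intervals, and the fibres of the corresponding $E(R)$ over that $x$ are vertical segments of comparable-to-nested lengths; summing their indicators in $y$ and then integrating in $x$ is where the Córdoba--Fefferman "$\log$" appears, and one must check that after the maximality reduction the number of relevant scales contributing at each point is effectively finite, or that the geometric decay across scales beats the count. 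I would handle this by the standard stopping-time organization of $\calr^*$ into generations by the dyadic value of $H(R)$, proving a bound like $\big|\{\sum_R\one_{E(R)}>\lambda\}\big|\lesssim 2^{-c\lambda}|\Omega|$ (exponential distributional bound), which immediately yields the $L^{q'}$ bound for every $q'<\infty$ and closes the argument. If the exponential bound is hard to obtain cleanly, a softer alternative is to invoke the Córdoba--Fefferman covering lemma as a black box in the form "for parallelograms with the nesting property of Lemma \ref{7rlemma}, $\big\|\sum\one_{E(R)}\big\|_{q'}\lesssim_{q'}\big|\bigcup E(R)\big|^{1/q'}$," which is essentially the statement attributed to Córdoba and R.\ Fefferman in the introduction, and then the rest of the proof is the routine Hölder computation above.
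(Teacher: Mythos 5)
Your proposal is correct and follows essentially the same route as the paper: a C\'ordoba--Fefferman greedy selection ordered by decreasing $|I(R)|$, a covering inequality via the vertical maximal function $M_V$, an $L^{q'}$ overlap bound for $\sum_R \one_{E(R)}$ obtained from the nesting that $E(R)\cap E(R')\neq\emptyset$ forces through Lemma \ref{7rlemma}, and the concluding H\"older/density computation. The only cosmetic difference is that the paper establishes the overlap bound by expanding the integer moment $\int(\sum_R \one_{E(R)})^{q'}$ over ``admissible'' tuples ordered by selection time and bounding it by $\sum_{R}|R|$ rather than by $|\Omega|$, in place of the exponential distributional estimate you suggest.
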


\begin{proof} We will find a subset $\calg \subset  \calr$ such that 
\beqan \label{bad}
| \bigcup _{R \in \calr} R |
           & \lesssim  \sum_{R\in\calg } |R|\ \ ,\\
 \label{good}
\int ( \sum _{R\in\calg } \one _{E(R)} )^{q'} 
& \lesssim \sum_{R\in\calg } |R|\ \ .
\eeqan

Inequality (\ref{bad}) will complete the proof of Lemma 
\ref{shadow} provided
\begin{equation}\label{goodct}
\sum_{R\in\calg} |R| \lesssim \delta ^{-q }  |G|\ \  .
\end{equation}
But with the density assumption for the parallelograms in $\calr$ we have
\beqa
\sum_{R\in\calg} |R|  \leq \sum_{R\in\calg} {1\over {\delta}} |E(R)\cap G| 
&=& {1\over {\delta} } 
\left\| \sum_{R\in\calg} \one _{E(R)} \one_G\right\|_1 \\
&\lesssim & {1\over {\delta} } \left(\sum_{R\in\calg} |R| \right) ^{1/q'} |G|^{1/q}\ \ ,
\eeqa
where in the last line we have used  H\"older's inequality and (\ref{good}).
After division by the middle factor of the right hand side we obtain
(\ref{goodct}).

The following argument is essentially the one used in \cite{CF} to prove endpoint estimates for the strong maximal operator. We select parallelograms according to the following iterative procedure.  
Initialize
\beqa
STOCK &=& \calr \\
\calg &=& \emptyset \\
\calb &=& \emptyset .
\eeqa
While $STOCK \neq \emptyset$, choose an $R\in STOCK$ with  maximal $|I(R)|$.
If
\beqan \label{condition}
\sum _{R'\in \calg \colon E(R) \cap E({R'}) \neq \emptyset }
|7R\cap 7R'| \geq {10^{-2}} |R|\ ,
\eeqan
then update
\beqa
STOCK &:=& STOCK \setminus R \\
\calg &:=& \calg \\
\calb &:=& \calb \cup \{R\}\ .
\eeqa
Otherwise update 
\beqa
STOCK &:=& STOCK \setminus R \\
\calg &:=& \calg \cup \{R\}\\
\calb &:=& \calb \ .
\eeqa
It is clear that this procedure yields a partition $\calr = \calg \sqcup \calb$.  

To prove (\ref{bad}), let $R\in \calb$ and let $R'$ be in the set $\calg(R)$ 
of all elements in $\calg$ which are chosen prior to $R$ and
satisfy $E(R)\cap E(R')\neq \emptyset$. The last property implies $U(R)\cap U(R')\neq \emptyset$
and $R\cap R'\neq \emptyset$. Note also that $I(R)\subset I(R')$. By Lemma \ref{7rlemma} applied
to $R$ and $R'_{I(R)}$, we have for every
vertical line $L$ through the interval $I(R)$:
$$
|L\cap 7R\cap 7R' | \geq \min(H(R),H(R')\geq {{|7R\cap 7R' |}\over{7 |I(R)|}}\ \ .
$$
Comparing for $(x,y)\in R$ and corresponding vertical line $L$ the maximal function $M_V$ 
with an average over the segment $L\cap 7R$ we obtain:
\beqa
M_{V} ( \sum_{R'\in\calg(R)} \one _{7R'}) (x,y) &\geq & 
 7^{-1}H(R)^{-1} \sum_{R'\in\calg(R)} |L\cap 7R\cap 7R'|
\\ 
&\geq & 49^{-1}|R|^{-1} \sum_{R'\in\calg(R)} | 7R\cap 7R'|  
 \geq 
{10^{-4}}\ \ ,
\eeqa
where the last estimate followed from (\ref{condition}). Hence
\beqa
\left| \bigcup _{R\in\calb} R \right| \leq |\{ x\colon M_{V} ( \sum_{r\in\calg} \one _R) (x) \geq {10^{-4}}\}| \lesssim \sum _{R\in\calg } |R| 
\eeqa
by the weak $(1,1)$ inequality for $M_{V}$.  This proves \eqref{bad}, because
the corresponding estimate for the union of elements in $\calg$ is trivial.

To prove (\ref{good}), consider $R', R \in \calg$ with $E(R)\cap E(R') \neq \emptyset$. If  $R'$ was selected first, then $H(R) > 7H(R')$, for otherwise 
we can use Lemma \ref{7rlemma} as above to conclude for $(x,y)\in R$
\beqa
M_{V} (\one _{7R'}) (x,y) &\geq & 
 7^{-1}|H(R)|^{-1} \sum_{R'\in\calg(R)} |L\cap 7R\cap 7R'| \geq 
{49^{-1}}\ \ ,
\eeqa
and hence $R$ would have been put into $\calb$.
Hence we have by Lemma \ref{7rlemma} 
\beqan\label{fiveinfivect}
7R'_I \subset 7R _{I} 
\eeqan
for every $I\subset  I(R)$. Hence
$$\sum_{R' \in \calg (R)} |7R'_{I} \cap 7R_I | = 
\sum_{R' \in \calg (R)} |7R'_{I}|
$$
is proportional to $|I|$ for $I\subset I(R)$. Hence
we have for all such $I$
\beqan\label{fivelessfivect}
\sum_{R' \in \calg (R)} |7R'_{I} \cap 7R_I | \lesssim |R_I|\ ,
\eeqan
since for $I=I(R)$ this holds when condition 
\eqref{condition} fails.

Let's say an $n$-tuple $(R^1, R^2, \dots, R^n)$ of elements in $\calg$
is {\it admissible} if $R^j$ is selected after $R^{j+1}$ for each $j$ and
$E(R^{j})\cap E(R^{j+1})\neq \emptyset$.
Then we have
\beqa
\int \left( \sum_{R\in \calg} \one _{E(R)} \right) ^n 
&\lesssim &  \sum _{R^1, \dots, R_n} |E({R^1}) \cap E({R^2}) 
\cap \dots \cap E({R^n})| \\
&\lesssim & \sum_{(R^1, R^2, \dots, R^n) \text{ adm.} } 
	|E({R^1}) \cap E({R^2}) \cap \dots \cap E({R^n})| \\
&\lesssim & \sum_{(R^1, R^2, \dots, R^n) \text{ admi.} } 
	|7R^1 \cap 7R^2 \cap \dots 7R^n |.\\
&\lesssim & \sum_{(R^1, R^2, \dots, R^n) \text{ adm.} } 
	|7R^1 
\cap 7R^2_{I(R_1)}  \cap \dots \cap 7R^n_{I(R_1)} |\ \ .
\eeqa
Using (\ref{fiveinfivect}),
which implies that the sets $7R^j_{I(R_1)}$ are nested,
and the estimate (\ref{fivelessfivect}) for the last pair
of sets, we can estimate the last display by 
\beq
\lesssim  \sum_{(R^1, R^2, \dots, R^n) \text{ adm.} } 
	|7R^1 
\cap 7R^2_{I(R_1)}  \cap \dots \cap 7R^{n-1}_{I(R_1)} |\ \ .
\eeq
Iterating the argument allows us to conclude 
(\ref{good}) for $q'$ an integer, which is clearly not a restriction
as the estimate is harder for larger $q'$. 
This completes the proof of Lemma \ref{shadow}.
\end{proof}

\subsection{Construction of the set $G$}

Let $G',H', u$ be as in Lemma \ref{majorsubsetlemma} and define 
$$G_i=\bigcup_{k\in \bbz, k<0}\{
R\in \calr_i: \frac{|E(R)|}{|R|}\ge 2^k  {\  \rm and  \ }
\frac{|H'\cap R|}{|R|}\ge C_\epsilon 2^{-(\frac 12 + \epsilon) k} 
\left(\frac{|H'|}{|G'|}\right)^{\frac 1 2}\}$$
for some small $\epsilon>0$ to be determined later through application of Estimate \ref{secondmaximal}
and some constant $C_\epsilon $ large enough so that we obtain with Theorem \ref{laceylict} below:
\beqa
|G_i| & \leq & \sum _{k\in \bbz, k<0 } C 2^{-k}
\left( C_{\epsilon} 2 ^{-(\frac 12+ \epsilon) k} \left( {{|H|'} \over { |G'|}}  \right) ^{1\over 2}   \right) ^{-2}
|H'| 
\le \frac{|G'|}4\ \ .
\eeqa
This construction essentially allows us to ignore trees with size and density both too large.  This will be used in the proof of Estimate \ref{secondmaximal}.

The following theorem is a variant
of the result in \cite{LL3}.  The theorem there is valid for arbitrary Lipschitz vector fields.  As stated here, the theorem is valid for vector fields depending on one variable.  In fact, the theorem holds for vector fields that are Lipschitz in the vertical direction only.  We recreate the proof given in \cite{LL3} below in the one-variable case.  The only use of the one-variable property comes in the proof of Lemma \ref{uprops} below.
\begin{thm}\label{laceylict}
Let $0\le \delta,\sigma \le 1$, let $H$ be a measurable set,
and let $\calr$  be a finite collection of parallelograms with vertical
edges and dyadic shadow such that for each $R\in \calr$ we have
$$|E(R)|\ge \delta |R|\ ,\ \ |H\cap R|\ge \sigma |R|\ \ .$$
Then 
$$\left|\bigcup_{R\in \calr}R\right|\lesssim \delta^{-1}
\sigma^{-2}|H| \ \ .$$
\end{thm}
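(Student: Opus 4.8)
The plan is to follow the covering scheme behind Lemma~\ref{shadow} (the Cordoba--Fefferman argument), refined as in the work of Lacey and Li. Observe that Lemma~\ref{shadow} already yields $\delta^{-q}$ for every $q>1$, but with a constant that blows up as $q\to1$; to reach the clean exponent $\delta^{-1}$ one must exploit the extra hypothesis $|H\cap R|\ge\sigma|R|$, paying the factor $\sigma^{-2}$. The first step is a reduction to one dimension: since $u$ depends on $x$ only, for every parallelogram $R$ we have $E(R)=\big(\{x\in I(R):u(x)\in U(R)\}\times\bbr\big)\cap R$, hence $|E(R)|=H(R)\,|\{x\in I(R):u(x)\in U(R)\}|$, and the hypothesis $|E(R)|\ge\delta|R|$ becomes $|\{x\in I(R):u(x)\in U(R)\}|\ge\delta|I(R)|$. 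This is the only point where the one-variable structure of the vector field is used; it is the content of Lemma~\ref{uprops}. The $H$-hypothesis is left untouched.

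\emph{Selection and the discarded parallelograms.} Order $\calr$ by decreasing shadow length, ties broken by decreasing height, and run the selection from the proof of Lemma~\ref{shadow}, producing a selected subfamily $\calr^*$: a parallelogram $R$ is discarded if $\sum_{R^*\in\calr^*:\,E(R)\cap E(R^*)\neq\emptyset}|7R\cap7R^*|\ge c|R|$, and is placed in $\calr^*$ otherwise. Exactly as in Lemma~\ref{shadow} --- applying Lemma~\ref{7rlemma} to $R$ and $R^*_{I(R)}$, and using that $E(R)\cap E(R^*)\neq\emptyset$ forces $U(R)\cap U(R^*)\neq\emptyset$, $R\cap R^*\neq\emptyset$, and $I(R)\subseteq I(R^*)$ --- every discarded $R$ satisfies $M_V\big(\sum_{R^*\in\calr^*}\one_{7R^*}\big)\gtrsim1$ on $R$, so the weak $(1,1)$ inequality for $M_V$ gives $\big|\bigcup_{R\text{ discarded}}R\big|\lesssim\sum_{R^*\in\calr^*}|7R^*|\lesssim\sum_{R^*\in\calr^*}|R^*|$. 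Since the union of the selected parallelograms is trivially bounded by $\sum_{R^*\in\calr^*}|R^*|$, the theorem reduces to proving $\sum_{R^*\in\calr^*}|R^*|\lesssim\delta^{-1}\sigma^{-2}|H|$.

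\emph{Cauchy--Schwarz against $H$.} Using $|H\cap R^*|\ge\sigma|R^*|$ and then Cauchy--Schwarz,
\[
\sum_{R^*\in\calr^*}|R^*|\ \le\ \sigma^{-1}\int_H\sum_{R^*\in\calr^*}\one_{R^*}\ \le\ \sigma^{-1}\,|H|^{1/2}\,\Big\|\sum_{R^*\in\calr^*}\one_{R^*}\Big\|_2 ,
\]
and $\big\|\sum_{R^*\in\calr^*}\one_{R^*}\big\|_2^2=\sum_{R_1^*,R_2^*\in\calr^*}|R_1^*\cap R_2^*|$. Thus it all comes down to the pairwise overlap estimate
\[
\sum_{R_1^*,R_2^*\in\calr^*}|R_1^*\cap R_2^*|\ \lesssim\ \delta^{-1}\sum_{R^*\in\calr^*}|R^*| ,
\]
since then $\sum|R^*|\le\sigma^{-1}|H|^{1/2}\big(\delta^{-1}\sum|R^*|\big)^{1/2}$, which rearranges to the desired bound.

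\emph{The overlap estimate: the main obstacle.} By symmetry it is enough to fix $R_1^*\in\calr^*$ and estimate the sum over those $R_2^*\in\calr^*$ with $|I(R_2^*)|\le|I(R_1^*)|$ and $R_1^*\cap R_2^*\neq\emptyset$; since shadows are dyadic this forces $I(R_2^*)\subseteq I(R_1^*)$. I would organize the $R_2^*$ by their dyadic shadow $I\subseteq I(R_1^*)$ and then by the dyadic size of $H(R_2^*)$. For a fixed shadow $I$ and comparable heights the selection rule caps how many $R_2^*$ can genuinely overlap $R_1^*$ --- two selected parallelograms whose $E$-sets meet cannot have $7$-dilates overlapping on more than a $c$-fraction --- and summing over all $I\subseteq I(R_1^*)$ accounts for one full copy of $|R_1^*|$; the remaining sum over the height scales and over how far the slope intervals $U(R_2^*)$, of width $2H(R_2^*)/|I|$, can stray from the graph of $u$ on the common shadow is a geometric series whose total is $\lesssim\delta^{-1}$, because Step~1 forces each $R_2^*$ to meet $u$ inside $U(R_2^*)$ on a $\delta$-fraction of its shadow and that portion must lie inside $U(R_1^*)$ whenever the two parallelograms intersect. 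Turning this bookkeeping into an honest estimate --- pinning down precisely how the density $\delta$, the slope-interval widths, and the greedy selection conspire to yield exactly one power of $\delta^{-1}$ and no further loss in $\sigma$ --- is the technical heart of the argument; the rest is Fubini, the maximal function estimate inherited from Lemma~\ref{shadow}, and Cauchy--Schwarz.
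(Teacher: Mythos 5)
Your reduction is structurally the same as the paper's: produce a selected subfamily whose union controls $\left|\bigcup_{R\in\calr}R\right|$, prove the $L^2$ overlap bound $\int(\sum_{R^*}\one_{R^*})^2\lesssim\delta^{-1}\sum_{R^*}|R^*|$ (this is (\ref{twoonect})), and close with Cauchy--Schwarz against $H$, which you execute correctly. But there is a genuine gap exactly where you declare the ``technical heart'': the overlap estimate is the entire content of the theorem, and your sketch of it both rests on an incorrect claim and uses a selection rule that does not support the subsequent argument. On the selection: you discard $R$ when $\sum_{R^*:\,E(R)\cap E(R^*)\neq\emptyset}|7R\cap 7R^*|\ge c|R|$, i.e.\ you only test $R$ against previously selected parallelograms whose $E$-sets meet $E(R)$. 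Two selected parallelograms can overlap geometrically in a large set while having disjoint $E$-sets (their slope intervals need not meet, or need not capture a common value of $u$ over a common $x$); such pairs contribute fully to $\int(\sum\one_{R^*})^2$ but are invisible to your selection rule. This is why the paper instead discards every $R'$ contained in $\{M_V(\sum_{R\in\calg}\one_R)\ge 10^{-3}\}$: survival of $R'$ then produces a point of $R'$ at which the vertical maximal function of the whole selected family is small, which is precisely what Lemma \ref{rprimect} needs to handle the pairs with nested slope intervals ($\calp''$), and what case (2) in the proof of Lemma \ref{uprops} needs for the remaining pairs.

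On the mechanism for $\delta^{-1}$: your claim that each $R_2^*$ meets the graph of $u$ on a $\delta$-fraction of its shadow and that ``that portion must lie inside $U(R_1^*)$ whenever the two parallelograms intersect'' is false --- intersecting parallelograms need not have intersecting slope intervals, and the pairs with $U(R)\not\subset 10^2U(R')$ are exactly the ones that carry the $\delta^{-1}$ in Lemma \ref{rct}. The correct mechanism is to greedily extract, from the parallelograms $R'$ overlapping a fixed $R$, a subfamily $\calg'$ with pairwise disjoint projections $\Pi E(R')\subset I(R)$; the one-variable hypothesis gives $|\Pi E(R')|\ge\delta|I(R')|$, hence $\sum_{R'\in\calg'}|I(R')|\le\delta^{-1}|I(R)|$, and it remains to show each discarded bush $\calb(R')$ contributes only $O(|R_{I(R')}|)$. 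That last bound is not a geometric series over height scales: it requires the slope-interval separation of Lemma \ref{uprops}, the geometric estimates of Lemma \ref{strombergct}, and the stopping-time decomposition of $I(R')$ into the families $\cali$ and $\cali'$ via Lemmas \ref{nosmallct} and \ref{iequalirct}. None of this is present in, or recoverable from, your outline, so the proposal does not yet constitute a proof.
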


\begin{rem}\label{othermaximal}
It is of interest whether a result like Theorem \ref{laceylict} holds 
with $\sigma$- power less than $2$. 
In the single height case, optimal results are already known 
with power all the way to ${1+\epsilon}$; see \cite{B1},\cite{B3}.  
However the important point is that the parallelograms  in Theorem \ref{laceylict} can have arbitrary height, which is 
necessary for creating the exceptional sets needed in the current paper. 
\end{rem}

\begin{proof} 
It is enough to find a subset $\calg\subset \calr$ 
such that
\beqan \label{coverct}
|\bigcup_{R\in \calr }R| & \lesssim &\sum_{R\in \calg} |R|\ ,
\\
\label{twoonect}
\int (\sum_{R\in \calg}1_R)^2 & \lesssim & \delta^{-1} \sum_{R\in \calg} |R|\ .
\eeqan
Namely, we have with (\ref{twoonect})
\beqa \sum_{R\in \calg} |R|
&\le& \sigma^{-1} \int \sum_{R\in \calg} \one_R(x) \one_H\, dx\\
&\le& \sigma^{-1} \|H\|^{\frac 12} (\int (\sum_{R\in \calg} \one_R(x))^2 \, dx)^{\frac 1 2}\\
&\lesssim & \sigma^{-1} \delta^{-\frac 12} |H|^{\frac 12} (\sum_{R\in \calg} |R|)^{\frac 1 2}
\eeqa
and the desired estimate follows from (\ref{coverct}).

We define the set $\calg$ by a recursive procedure. Initialize
\beqa \calg & \leftarrow & \emptyset\ ,\\
STOCK & \leftarrow & \calr\ .
\eeqa
While $STOCK$ is not empty, select $R\in STOCK$
such that $|I(R)|$ is maximal.
Update 
\beqa
\calg & \leftarrow & \calg\cup \{R\}\ ,\\
\calb \leftarrow \{R'\in STOCK: R' &\subset & 
\{x:M_V(\sum_{R\in \calg}1_R)(x)\ge 10^{-3}\}\} \ \ ,\\
STOCK & \leftarrow & STOCK\setminus \calb
\eeqa

This loop will terminate, because
the collection $\calr$ is finite and we remove at each step at least 
the selected $R$ from $STOCK$.

By the Hardy Littlewood maximal bound, it is
clear that (\ref{coverct}) holds and it remains to show
(\ref{twoonect}).
By expanding the square in (\ref{twoonect}) and using symmetry 
it suffices to show
$$\sum_{(R,R')\in\calp}  |R\cap R'| 
\lesssim \delta^{-1}\sum_{R\in \calg}|R|\ \ ,$$
where $\calp$ is the set of all pairs $(R,R')\in \calg\times \calg$
with $R\cap R'\neq\emptyset$ and $R$ is chosen prior to $R'$.
We partition $\calp$ into
\beqa
\calp' &=& \{(R,R')\in \calp: U(R)\not \subset 10^2U(R')\}\ \ ,\\
\calp'' &=&  \{(R,R')\in \calp: U(R) \subset 10^2U(R')\}\ \ .
\eeqa
Theorem \ref{laceylict} is reduced to the following two lemmas: \end{proof}

\begin{lemma}\label{rprimect}
For fixed $R'\in \calg$ we have 
$$\sum_{R\in\calr,  (R,R')\in \calp''}
|R\cap R'| \lesssim |R'|\ .$$
\end{lemma}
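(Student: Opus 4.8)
The plan is to exploit the fact that for a pair $(R,R')\in\calp''$ we have $U(R)\subset 10^2U(R')$, together with the selection rule for $\calg$, to show that the parallelograms $R$ appearing in the sum essentially tile a bounded dilate of $R'$ with bounded overlap. First I would record the geometric consequence of $(R,R')\in\calp''$: since $R$ is chosen before $R'$, we have $|I(R)|\ge|I(R')|$, and since $R\cap R'\neq\emptyset$ and $U(R)\cap U(R')\neq\emptyset$ (the latter because $U(R)\subset 10^2U(R')$ forces the two slope intervals to overlap once $R\cap R'\neq\emptyset$ and $R'$ has positive height), Lemma \ref{7rlemma} applied to $R'_{I(R')}$ and (a suitable piece of) $R$ gives that $R'$ — restricted to the relevant shadow — lies inside a fixed dilate of $R$. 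More usefully, the inclusion $U(R)\subset 10^2 U(R')$ says $H(R)/|I(R)|\lesssim H(R')/|I(R')|$, so the $R$'s are not much "fatter" per unit shadow than $R'$, and comparing central lines (whose slopes lie in $10^2 U(R')$) shows each such $R$ is contained in a universal dilate $C R'$ of $R'$, at least after intersecting with $I(R')\times\bbr$; I would set up exactly this containment carefully, handling the part of $I(R)$ outside $I(R')$ trivially since that part contributes nothing to $R\cap R'$.

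Next I would bound the overlap. The key is the selection rule: when $R'$ was added to $\calg$, every $R''\in STOCK$ contained in $\{x:M_V(\sum_{R\in\calg}\one_R)(x)\ge 10^{-3}\}$ was discarded. In particular, for any vertical line $L$ through $I(R')$, the sum $\sum_{R\in\calg,\,R\text{ before }R'}\one_R$ along $L$, when averaged over the vertical segment $L\cap C R'$, must be controlled — otherwise $R'$ itself would have been thrown into $\calb$ at an earlier stage. This is precisely where I expect the main obstacle: one must argue that the $R$'s in $\calp''$, which all pass through $R'$ with comparable slope and all have shadow containing $I(R')$, have the property that on each vertical line $L$ through $I(R')$ the segments $L\cap R$ are intervals of length $\ge H(\text{smallest})$ clustered near $L\cap R'$, so that an $M_V$ bound at points of $R'$ (available because $R'$ survived selection against all earlier-chosen elements) converts into the $\ell^1$ estimate $\sum_R |L\cap R\cap C R'|\lesssim |L\cap C R'|\sim H(R')$. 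Integrating this over $L\in I(R')$ yields $\sum_{(R,R')\in\calp''}|R\cap R'|\lesssim|R'|$, as desired. The factor $\delta^{-1}$ does not even appear here — it is absorbed in the companion Lemma handling $\calp'$ — so the target is the clean bound $\lesssim|R'|$.

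Let me make the overlap step precise. Fix $R'$ and a vertical line $L$ through $x\in I(R')$, and pick $(x,y)\in R'$. For each $(R,R')\in\calp''$ we have $x\in I(R)$ and, by the slope comparison and the fact that $R\cap R'\neq\emptyset$, the vertical segment $L\cap R$ has length $H(R)$ and is within distance $\lesssim H(R')$ of $y$; hence $L\cap R\subset L\cap C R'$ for a universal $C$, and $|L\cap R| = H(R)\ge \min(H(R),H(R'))$. Therefore
\beqa
\sum_{(R,R')\in\calp''} |L\cap R|
&\le& |L\cap C R'|\cdot \sup_{(x,z)\in C R'} \Big(\sum_{R\in\calg,\,R\text{ before }R'}\one_R\Big)(x,z)\\
&\lesssim& H(R')\cdot M_V\Big(\sum_{R\in\calg,\,R\text{ before }R'}\one_R\Big)(x,y)
\eeqa
and the maximal function on the right is $< 10^{-3}$, for otherwise the selection procedure would have removed $R'$ before it was chosen. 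Here one must be slightly careful: the $M_V$ control is against the sum over elements chosen strictly before $R'$, which is exactly the set of $R$ with $(R,R')\in\calp$; this is why I restrict attention to $R$ chosen prior to $R'$ in the statement. Integrating over $x\in I(R')$ and using $|R'|\sim H(R')|I(R')|$ gives the claim. The remaining bookkeeping — that $L\cap R\subset L\cap CR'$ and that the slope of $R$'s central line lies in $10^2U(R')$, which together force the vertical localization — is the routine part I would fill in but not belabor here.
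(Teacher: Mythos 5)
Your overall strategy is the paper's: use the selection rule (there is a point of $R'$ at which $M_V$ of the sum of previously chosen indicators is below $10^{-3}$) together with a containment of the form $R_{I(R')}\subset CR'$ to conclude $\sum H(R)\lesssim H(R')$ and hence the lemma. But there is a genuine gap at the containment step. You derive "$L\cap R$ is within distance $\lesssim H(R')$ of $y$" from the slope inclusion $U(R)\subset 10^2U(R')$ and $R\cap R'\neq\emptyset$; this requires $H(R)\lesssim H(R')$, which you never establish. The inclusion $U(R)\subset 10^2U(R')$ only gives $H(R)/|I(R)|\le 10^2H(R')/|I(R')|$, and since $R$ was chosen first we have $|I(R)|\ge|I(R')|$ with no upper bound on the ratio, so $H(R)$ may be vastly larger than $H(R')$; in that case $L\cap R$ is an interval of length $H(R)\gg H(R')$ and cannot sit inside $L\cap CR'$. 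The paper closes this hole with a separate contradiction argument before doing the overlap count: if $H(R)>H(R')$, apply Lemma \ref{7rlemma} to $10^2R_{I(R')}$ and $10^2R'$ (their slope intervals meet because $U(R)\subset U(10^2R')$, and they intersect) to get $R'\subset 700R$; then $R'\subset\{M_V\one_R>1/700\}$, so $R'$ would have been discarded into $\calb$ when $R$ was selected, contradicting $R'\in\calg$. Hence $H(R)\le H(R')$ for every $(R,R')\in\calp''$, and only then does your containment (and the paper's $R_{I(R')}\subset 700R'$) hold. Note this step uses the selection rule a second time, in the opposite direction from your single use of it.

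Two smaller inaccuracies: (i) your parenthetical claim that $U(R)\subset 10^2U(R')$ "forces" $U(R)\cap U(R')\neq\emptyset$ is false ($U(R)$ can sit at the edge of $10^2U(R')$ away from $U(R')$); what is true and what is needed is $U(R)\cap U(10^2R')\neq\emptyset$. (ii) The selection rule does not give $M_V(\sum_{R\ \mathrm{before}\ R'}\one_R)<10^{-3}$ at \emph{every} point of $R'$, only at \emph{some} point, since $R'$ merely fails to be contained in the bad set; your "integrate over all $L$" phrasing implicitly assumes the former. This is harmless here because every relevant $R$ has $I(R)\supset I(R')$, so one good vertical line already yields $\sum H(R)\lesssim H(R')$, which controls all lines simultaneously — but the argument should be phrased through that single good point, as the paper does. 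Also, the intermediate bound of the line integral by $|L\cap CR'|$ times a supremum, which you then dominate by $M_V$ at a point, is not valid as written (a supremum is not bounded by a maximal function at a point); pass directly from the integral over $L\cap CR'$ to the average, which is what $M_V$ controls.
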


\begin{lemma}\label{rct}
For fixed $R\in \calg$  we have
$$\sum_{R'\in\calr,  (R,R')\in \calp'}
|R\cap R'| \lesssim  \delta^{-1}|R|\ .$$
\end{lemma}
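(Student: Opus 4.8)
The plan is to fix $R\in\calg$ once and for all and to exploit the two features of a pair $(R,R')\in\calp'$: the selection order, which forces $I(R')\subseteq I(R)$, and the transversality $U(R)\not\subset 10^2U(R')$, which forces $R\cap R'$ to lie in a thin slab over a short sub-shadow. Since $R$ is chosen before $R'$, $|I(R')|\le|I(R)|$, and since both shadows are dyadic intervals of $\cali_i$ meeting each other (because $R\cap R'\neq\emptyset$), $I(R')\subseteq I(R)$. Writing $s,s'$ for the slopes of the central lines of $R,R'$ and $h=H(R)$, $h'=H(R')$, the vertical slices $R_x,R'_x$ are intervals of lengths $h,h'$ whose centres differ by an affine function of $x$ with leading coefficient $s-s'$; hence $J(R'):=\{x\in I(R'):R_x\cap R'_x\neq\emptyset\}$ is a subinterval of $I(R')$ with $|J(R')|\le (h+h')/|s-s'|$, and
$$|R\cap R'|\ \le\ |J(R')|\,\min(h,h').$$
Since $|U(R')|=2h'/|I(R')|$, the condition $U(R)\not\subset 10^2U(R')$, after separating the cases $|U(R)|\le 2|U(R')|$ and $|U(R)|>2|U(R')|$, yields $|s-s'|\gtrsim h'/|I(R')|$ (the second case being treated directly from $|U(R)|\gtrsim|U(R')|$ and $s=s'$ being impossible there). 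Consequently $|R\cap R'|\lesssim|R_{I(R')}|$, with an additional gain proportional to $|U(R')|/|s-s'|$ which I would keep.

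I would then organize the sum dyadically: group the admissible $R'$ by the scale $|I(R')|=2^{-m}|I(R)|$ with $m\ge 0$, by the dyadic subinterval $I=I(R')\subseteq I(R)$ of that length, and by the dyadic size of the slope gap $|s-s'|$. For $I$ and the slope scale fixed, the slab bound converts $\sum_{R'}|R\cap R'|$ into a small factor times $\sum_{R'}|R'\cap(\text{a }3h\text{-neighbourhood of }R\text{ over }I)|$; here each $R'\in\calg$, so I would use the property of the selection procedure that among parallelograms of $\calg$ sharing the shadow $I$ and a comparable slope the vertical overlap is controlled (by Lemma \ref{7rlemma} together with the $M_V$-test defining $\calg$, two such with slices meeting over $I$ cannot both survive unless their heights and positions are already separated). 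Feeding in the density bound in the form $|R'|\le\delta^{-1}|E(R')|$ together with the near-disjointness of the sets $E(R')$ forced by the dispersion of the slope intervals $U(R')$ — exactly the mechanism used in Lemma \ref{shadow} and Lemma \ref{rprimect} — would give a per-scale estimate $\sum_{R':\,I(R')=I}|R\cap R'|\lesssim c_m\,\delta^{-1}h|I|$ with $\sum_m c_m<\infty$, the summability in $m$ coming from the transversal gain. Summing over the $2^m$ subintervals $I$ of length $2^{-m}|I(R)|$ (using $\sum_{|I|=2^{-m}|I(R)|}|I|=|I(R)|$) and then over $m$ yields
$$\sum_{R'\in\calr,\ (R,R')\in\calp'}|R\cap R'|\ \lesssim\ \delta^{-1}h|I(R)|\ =\ \delta^{-1}|R|,$$
which is the claim.

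The main obstacle is precisely the middle step: making rigorous the controlled vertical overlap of same-shadow, comparable-slope elements of $\calg$, and combining it cleanly with the transversal slab estimate and the density bound so that the per-scale constants $c_m$ are genuinely summable rather than merely bounded. This is where the bookkeeping of \cite{LL3} is concentrated, and where the numerology must be balanced — the threshold $10^2$ in the definition of $\calp'$, the threshold $10^{-3}$ in the construction of $\calg$, the constant $7$ from Lemma \ref{7rlemma}, and the slope bound $\|u\|_\infty\le 10^{-1}$ all have to be chosen compatibly. The slice computation and the dyadic summation themselves are routine once this is in place; as in Lemma \ref{rprimect}, the parameters $k$ and $k_0$ play no role.
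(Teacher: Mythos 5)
There is a genuine gap, and you have in fact located it yourself: the ``middle step'' you defer is the entire content of the lemma, and the two concrete ingredients you propose to carry it out do not suffice. First, the slope-separation claim $|s-s'|\gtrsim h'/|I(R')|$ is false in the case $|U(R)|>10^2|U(R')|$: there the non-containment $U(R)\not\subset 10^2U(R')$ holds automatically and imposes no constraint whatsoever on $s-s'$; one can have $s=s'$ with $R'$ a long, thin, parallel parallelogram sitting inside $R$, so there is no slab confinement and no ``transversal gain.'' The paper's Lemma \ref{uprops} obtains slope separation in this case from a different source entirely, namely that $R'$ survived the $M_V$-test against $R$ during the selection of $\calg$ (so $R'\not\subset 10^3R$, whence $U(R')$ meets the complement of $400U(R)$); your argument never invokes this. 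Second, even where the slab estimate is valid, the gain $|U(R')|/|s-s'|$ can be of order $1$ (take $|s-s'|\sim 10^2|U(R')|$), so it cannot produce summable constants $c_m$ across scales; asserting ``$\sum_m c_m<\infty$ comes from the transversal gain'' is precisely the unproved step.

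The paper's architecture avoids the need for per-scale summability altogether, and the $\delta^{-1}$ enters by a different, one-dimensional mechanism. One runs a Vitali-type selection on the family $\{R':(R,R')\in\calp'\}$, greedily choosing $R'$ with maximal shadow and discarding into a bush $\calb(R')$ every $R''$ with $\Pi E(R'')\cap\Pi E(R')\neq\emptyset$. The selected projections $\Pi E(R')$ are then disjoint subsets of $I(R)$, and since $|\Pi E(R')|\ge\delta|I(R')|$ by the density hypothesis, one pays $\delta^{-1}$ exactly once: $\sum_{R'\in\calg'}|I(R')|\le\delta^{-1}|I(R)|$. The remaining task is to show each bush contributes only $O(|R_{I(R')}|)$ with no loss in $\delta$ and no sum over scales; this is done via the common slope interval $U$ of Lemma \ref{uprops} (which requires the one-variable structure of $u$ to pass from $\Pi E(R'')\cap\Pi E(R')\neq\emptyset$ to $U(R'')\cap U(R')\neq\emptyset$), the slab estimates of Lemma \ref{strombergct}, and a stopping time on dyadic subintervals of $I(R')$ (Lemmas \ref{nosmallct} and \ref{iequalirct}) whose engine is again the $M_V$-test from the construction of $\calg$. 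If you want to complete your plan, you would need to import both of these ideas --- the projection disjointification for the $\delta^{-1}$, and the $M_V$-based control of same-shadow overlaps --- at which point you would essentially be reproducing the paper's proof.
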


Proof of Lemma \ref{rprimect}: 
We first argue by contradiction that $\calp''$
does not contain a pair $(R,R')$ with
$H(R')<H(R)$.
By definition of $\calp''$ we have $U(R)\cap U(100R')\neq \emptyset$.
By Lemma \ref{7rlemma}, applied to $100R_{I(R')}$ and $100R'$, we conclude
that $R'$ is contained in $700R$. 
But then
$$R'\subset \{M_V 1_{R}>1/700\}\ \ ,$$
which contradicts the selection of $R'$
and completes the proof that we have $H(R)\le H(R')$ for all
$(R,R')\in \calp''$.

Now we use Lemma \ref{7rlemma} again
to conclude that for each $(R,R')\in \calp''$ 
we have $R_{I(R')}\subset 700 R'$.
Hence we have for some point $(x,y)$ in $R'$
\beqa 10^{-3} &\ge & M_V(\sum _{R\in \calg : (R,R')\in \calp''}1_{R})(x,y)\\
&\ge & \frac 1{700H(R')}\sum_{R: (R,R')\in \calp''} H(R)\\
&\ge & \frac 1{700} \sum_{R: (R,R')\in \calp''} |R\cap R'|/|R'|\ \ .
\eeqa
This proves Lemma \ref{rprimect}. \endproof

It remains to prove Lemma \ref{rct}.
Fix $R\in \calg$. We decompose $\{R':(R,R')\in \calp'\}$ by the following
iterative procedure:
Initialize
\beqa
STOCK & \leftarrow & \{R': (R,R')\in \calp'\}\ ,\\
\calg' & \leftarrow & \emptyset\ .
\eeqa
While $STOCK$ is non-empty, select $R'\in STOCK$ with maximal
$|I_{R'}|$. Update 
$$\calg' \leftarrow \calg'\cup \{R'\}\ ,$$
$$\calb(R')\leftarrow \{R''\in STOCK \colon \Pi E(R'')\cap \Pi E(R')\neq \emptyset\}\ ,$$
$$STOCK\leftarrow STOCK \setminus \calb(R')\ \ ,$$
where $\Pi$ denotes the projection onto the $x$ axis.
By construction, the sets $\Pi E(R')$ with $R'\in  \calg'$ are disjoint and we have
$$\sum _{R'\in \calg'} |I_{R'}|\le \delta^{-1}\sum _{R'\in \calg'} |\Pi E(R')|\le \delta^{-1} |I(R)|\ .$$
As the sets $\calb(R')$ with $R'\in \calg'$ partition the summation
set of the left-hand-side of Lemma \ref{rct}, it suffices to show for each $R'\in \calg'$
$$\sum _{R''\in \calb(R')} |R''\cap R| \lesssim |R_{I(R')}| \ \ .$$
In what follows we fix $R'\in \calg'$.

\begin{lemma}\label{uprops}
There is an interval $U$ (depending on $R$ and $R'$) of slopes with 
\beqan 
\label{firstuct}
5|U(R)| &\le & |U|  \ \ ,\\ 
\label{seconduct}
 U(R)\cap 5U & = & \emptyset\ \ ,\\
\label{thirduct}
U(R) &\subset &  6U \ \ ,\\
\label{fourthuct}
U(R'') &\subset & U
\eeqan
for all $R''\subset \calb(R')$.
\end{lemma}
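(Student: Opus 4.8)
The plan is to construct $U$ explicitly from the geometry of the pair $(R,R')$ and the definition of $\calp'$. Recall that $(R,R')\in\calp'$ means $R\cap R'\neq\emptyset$, $R$ was chosen before $R'$ (so $|I(R)|\ge|I(R')|$, hence $I(R')\subset I(R)$ after possibly shrinking, and in any case the shadows overlap), and crucially $U(R)\not\subset 10^2U(R')$. The first step is to record what the failure of containment $U(R)\subset 10^2 U(R')$ buys us: since $U(R)$ and $U(R')$ are intervals and $R\cap R'\neq\emptyset$ forces (via Lemma \ref{7rlemma}, or directly) that the two slope intervals cannot be wildly separated relative to their lengths, the only way $U(R)\not\subset 10^2U(R')$ can happen is that $|U(R)|$ is not much smaller than $|U(R')|$ is \emph{not} the obstruction --- rather, $U(R)$ must stick out of $10^2U(R')$ on at least one side. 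I would quantify this: there is a point $s_0$ in $U(R)$ with $\dist(s_0, \text{center}(U(R')))\ge 50|U(R')|$, or else $|U(R)|$ itself is large, say $|U(R)|\ge 10|U(R')|$. In either sub-case I define $U$ to be a suitable dilate/translate of $U(R)$: roughly, take $U$ to be the interval of length $\sim|U(R)|$ adjacent to $U(R)$ on the side away from (or surrounding) $U(R')$, scaled so that (\ref{firstuct}) and (\ref{thirduct}) hold by construction and (\ref{seconduct}) holds because $U$ is chosen to sit on the far side of $U(R)$ from $U(R')$.

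The second and main step is to verify (\ref{fourthuct}): $U(R'')\subset U$ for every $R''\in\calb(R')$. Here I use the defining property of $\calb(R')$, namely $\Pi E(R'')\cap\Pi E(R')\neq\emptyset$: there is a common $x$ with $u(x)\in U(R'')\cap U(R')$ (since $(x,y)\in E(R'')$ means $u(x)\in U(R'')$, similarly for $R'$). Thus $U(R'')$ and $U(R')$ share the value $u(x)$, so $U(R'')$ is ``anchored'' near $U(R')$. Combined with the fact that $R''\in STOCK$ at the time $R'$ was selected, so $|I(R'')|\le|I(R')|\le|I(R)|$, and with the height/shadow constraints defining $\calr_i$, I should be able to bound $|U(R'')|$ and its location: $U(R'')$ lies within a bounded multiple of $U(R')$ around $u(x)$, hence within a bounded multiple of $U(R')$ around the center of $U(R')$. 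Since $U$ was built to contain a large neighborhood of $U(R)$ on the far side from $U(R')$ and to have length $\gtrsim|U(R)|\gtrsim|U(R')|$, a careful choice of the constants ($5$, $6$, $10^2$) makes $U(R'')\subset U$ go through. The constants in (\ref{firstuct})--(\ref{thirduct}) are generous enough that there is room to absorb the $O(1)$ losses.

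The step I expect to be the genuine obstacle is reconciling (\ref{seconduct}) with (\ref{fourthuct}) simultaneously: I need $U$ far enough from $U(R)$ that $5U$ misses $U(R)$, yet close enough / large enough that it still captures all the $U(R'')$, which are pinned near $U(R')$ --- and $U(R')$ itself might be close to $U(R)$ (we only know $U(R)\not\subset 10^2U(R')$, which is compatible with $U(R')$ being a tiny interval sitting right at the edge of, or just outside, a huge $U(R)$). The resolution should be exactly the case distinction above: if $|U(R)|\gg|U(R')|$, then $U(R)$ is long and I can take $U$ to be, say, the left or right ``half-neighbor'' of $U(R)$ of length $\sim|U(R)|$; the $U(R'')$'s, being of size $\lesssim|U(R')|\ll|U(R)|$ and near $U(R')\subset O(1)U(R)$, still fit into $6U$-type neighborhoods once one checks they land on the correct side; and if instead $U(R)$ and $U(R')$ are comparable in size but $U(R)$ pokes out of $10^2U(R')$, then the poke-out region has length $\gtrsim|U(R)|$ and I place $U$ there. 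Carrying out this bookkeeping with the specific constants is the only real work; everything else is the two one-line geometric facts (shared slope value from $\Pi E$ overlap; maximal-shadow selection order) plus Lemma \ref{7rlemma}.
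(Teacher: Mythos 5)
There is a genuine gap, and it sits exactly where you flagged the ``genuine obstacle.'' Your anchoring observation is right and is the same as the paper's (the one‑variable property turns $\Pi E(R'')\cap\Pi E(R')\neq\emptyset$ into $U(R'')\cap U(R')\neq\emptyset$), but your resolution of the hard case fails. When $|U(R)|\gg|U(R')|$, the defining condition of $\calp'$, namely $U(R)\not\subset 10^2U(R')$, is automatically true and carries \emph{no} separation information: $U(R')$ may sit in the interior of $U(R)$. In that configuration no choice of $U$ whatsoever can work, because (\ref{seconduct}) forces $U\cap U(R)=\emptyset$ while (\ref{fourthuct}) forces $U$ to contain each $U(R'')$, which meets $U(R')\subset U(R)$; the two conclusions are mutually contradictory. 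So this configuration must be \emph{excluded}, not accommodated by placing $U$ in a ``poke‑out region'' or a ``half‑neighbor'' of $U(R)$ and hoping the $U(R'')$ ``land on the correct side.'' The paper excludes it using an ingredient you never invoke: since $R'$ was admitted to $\calg$ after $R$, it was not swept into $\calb$ at $R$'s selection step, so $R'\not\subset\{M_V\one_R\ge 10^{-3}\}$ and hence $R'\not\subset 10^3R$; combined with $R\cap R'\neq\emptyset$, $I(R')\subset I(R)$ and $H(R')<H(R)$ (which follows from $|U(R)|>|U(R')|$ and $|I(R')|\le|I(R)|$), this forces $U(R')$ to contain a point outside $400U(R)$, i.e.\ $U(R')$ is genuinely far from $U(R)$ in this case too. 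Only then can one center $U$ on (a dilate of) $U(R')$ on the far side of $U(R)$, as the paper does in both cases.

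A secondary but real problem is your claim that $U(R'')$ ``lies within a bounded multiple of $U(R')$.'' Nothing bounds $|U(R'')|=2H(R'')/|I(R'')|$ by $|U(R')|$: the shadow comparison $|I(R'')|\le|I(R')|$ goes the wrong way and the heights are unconstrained. The paper proves (\ref{fourthuct}) by contradiction instead: if $U(R'')\not\subset U$, then since $U(R'')$ meets $U(R')$, which lies in the middle fifth of $U$, the interval $U(R'')$ must be long, giving $U\subset 7U(R'')$ and hence $U(R)\subset 6U\subset 10^2U(R'')$ --- contradicting $(R,R'')\in\calp'$. That second use of the $\calp'$ condition, applied to the pair $(R,R'')$ rather than $(R,R')$, is the other ingredient missing from your outline.
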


\begin{proof}
We distinguish two cases: 
\begin{enumerate}
\item $|U(R)|\le |U(R')|$ 
\item  $|U(R)|>|U(R')|$.
\end{enumerate}
In the first case we use the definition of $\calp'$  
to conclude $$U(R)\cap 25U(R')=\emptyset\ .$$ We then define
$U=KU(R')$ where $K\ge 5$ is the largest number (or very
close to that) such that
$U(R) \cap 5KU(R') =\emptyset$. 
Then we have immediately (\ref{firstuct}), (\ref{seconduct}) 
and (\ref{thirduct}). To see (\ref{fourthuct}) 
assume to get a contradiction that  $U(R'')\not\subset U$. 

By construction of $\calb(R')$, we know that 
$\Pi (E (R'')) \cap \Pi (E(R'))\neq \emptyset$, which implies that 
$U(R'')\cap U(R')\neq \emptyset$ since the underlying vector field $v$ is constant along vertical lines.  Since $U(R')$ is contained in the middle 
fifth of the interval $U$, we conclude $|U|\le 3|U(R'')|$ and $U\subset 7U(R'')$. But then 
$U(R)\subset 10^2 U(R'')$, a contradiction to $(R,R'')\in \calp'$.

In the second case we have $H(R)>H(R')$ because $|I(R')|\le | I(R)|$. 
Since $R'$ is not contained in the set $\{M_V 1_R>10^{-3}\}$ and thus
not in $10^3R$,  we conclude that $U(R')$ contains an element 
not in $400U(R)$. Hence 
$$25 \frac{|U(R)|}{|U(R')|} U(R')$$ does not intersect $U(R)$. From there we may 
proceed as before with $U(R')$ replaced by this bigger interval. 
This completes the proof of Lemma \ref{uprops}. \end{proof}

\begin{lemma}\label{strombergct}
Let $I$ be a dyadic interval contained in $I_{R'}$.
Then for all $R''\in \calb(R')$ with $H(R'')\le 20 |U||I|$ we have
that 
\begin{equation}\label{firstuict}
R_I\cap {R''} \neq \emptyset \implies 
{R''}_I  \subset  
50  (1+{|U||I|} H(R)^{-1}) R 
\end{equation}
and 
\begin{equation}\label{seconduict}
|R_I\cap {R''}|\le  10 |U|^{-1}H(R'') H(R)\ \ .
\end{equation}
\end{lemma}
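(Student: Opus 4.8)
The plan is to prove Lemma \ref{strombergct} by exploiting the geometry encoded in Lemma \ref{uprops}, in particular the fact that $U$ is much larger than $U(R)$ while $U(R)$ is separated from $5U$. The starting point is that since $R'' \in \calb(R')$ and $R_I \cap R'' \neq \emptyset$, the slopes of $R$ and $R''$ interact only through the intermediate interval $U$: by \eqref{seconduct} and \eqref{fourthuct}, $U(R)$ and $U(R'')$ are separated by a gap of size comparable to $|U|$, yet both are ``near'' $U$. So while a single line through $R_I$ can meet $R''$, the two central lines have slopes differing by roughly $|U|$. Over the horizontal interval $I \subset I_{R'} \subset I(R)$, this slope difference forces a vertical displacement of order $|U||I|$ between the central line of $R$ and that of $R''$ restricted to $I$.

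First I would make the containment statement \eqref{firstuict} precise. Assume $R_I \cap R'' \neq \emptyset$ and that $H(R'') \le 20|U||I|$. Pick a point $(x_0,y_0)$ in the intersection. On the interval $I$, the part of $R$ available is $R_I$, whose central line has slope in $U(R) \subset 6U$, hence the vertical extent of $R_I$ is $H(R) + |U(R)||I| \lesssim H(R) + |U||I|$. Similarly ${R''}_I$ has vertical extent $H(R'') + |U(R'')||I| \lesssim |U||I| + |U||I| \lesssim |U||I|$, using $H(R'') \le 20|U||I|$ and $U(R'') \subset U$. Since the two sets share the point $(x_0,y_0)$ and both live over $I$, the set ${R''}_I$ is contained in the vertical neighborhood of $R_I$ of radius $\lesssim H(R) + |U||I|$. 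Dilating $R$ vertically by the factor $50(1 + |U||I| H(R)^{-1})$ expands its height to $\gtrsim H(R) + |U||I|$, and one checks this is enough to swallow ${R''}_I$; the slope of $R$'s central line is unchanged under the dilation $CR$, so the horizontal/slope geometry is unaffected and only the vertical containment needs to be verified, which is the computation just sketched.

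Next I would prove the area estimate \eqref{seconduict}. The key is that $R_I \cap R''$, projected to each vertical line $L$ through $I$, is an interval of length at most $\min(H(R), H(R''))$ — in fact at most $H(R'')$. But more is true: because the central lines of $R_I$ and $R''$ have slopes differing by an amount $\gtrsim |U|$ (the separation from Lemma \ref{uprops}), the vertical fibers $L \cap R_I$ and $L \cap R''$ can overlap only for $x$ in a subinterval $J \subset I$ of length $\lesssim |U|^{-1}\bigl(H(R) + H(R'')\bigr)$: as $x$ moves across $J$, the centers of the two fibers separate at rate $\gtrsim |U|$, so after the fibers have half-widths summing to $\le \tfrac12(H(R)+H(R''))$ they no longer meet. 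Hence
\beqa
|R_I \cap R''| \le \int_J \min(H(R), H(R''))\, dx
\lesssim |U|^{-1}\bigl(H(R) + H(R'')\bigr) H(R'')\ \ .
\eeqa
Since we are in the regime $H(R'') \le 20|U||I|$ — actually what one wants is $H(R'') \lesssim H(R)$, which should follow from the selection order ($R$ was chosen before $R'$, and $R''$ was removed by $R'$, forcing $|I(R'')| \le |I(R')| \le |I(R)|$, plus the maximal-function exclusion preventing $R''$ from being too tall relative to $R$) — the factor $H(R) + H(R'')$ is comparable to $H(R)$, giving the claimed bound $\lesssim |U|^{-1} H(R'') H(R)$.

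The main obstacle I anticipate is the bookkeeping in the second estimate: one must verify carefully that the ``center separation rate'' is genuinely $\gtrsim |U|$ rather than merely $\gtrsim |U(R)|$, which is precisely what properties \eqref{seconduct}--\eqref{fourthuct} of Lemma \ref{uprops} are designed to deliver (the slope of $R''$ is inside $U$, the slope of $R$ is outside $5U$, so the difference is at least of order $|U|$). A secondary subtlety is controlling $H(R'')$ against $H(R)$: one cannot simply assume $H(R'') \le H(R)$, and the hypothesis $H(R'') \le 20|U||I|$ with $|I| \le |I_{R'}| \le |I(R)|$ gives $H(R'') \le 20|U||I(R)|$, which combined with $|U| \lesssim |U(R'')|$ is not immediately $\lesssim H(R)$; the resolution is that $R'$ (hence each $R'' \in \calb(R')$ after shrinking to $I_{R'}$) avoids $\{M_V 1_R > 10^{-3}\}$, and this together with Lemma \ref{7rlemma} bounds the relevant heights. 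I would isolate that comparison as a short preliminary observation before doing either of the two displayed estimates.
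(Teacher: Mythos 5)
Your overall approach is the same as the paper's: normalize (explicitly or implicitly) so that only the slope of $R''$ relative to $R$ matters, use Lemma \ref{uprops} to pin that relative slope between $2|U|$ and about $10|U|$, deduce \eqref{firstuict} from the bounded vertical drift over $I$ plus the height bound $H(R'')\le 20|U||I|$, and deduce \eqref{seconduict} from the fact that two slabs whose slopes differ by $\gtrsim|U|$ intersect in a thin sliver. The paper does the first step by an explicit shear putting the central line of $R$ on the $x$-axis; you should do the same, because as written you conflate $|U(R)|$ (the length of the slope interval, $2H(R)/|I(R)|$) with the magnitude of the central slope of $R$, and the ``vertical extent'' bounds you state are only correct relative to the central line of $R$, not in absolute terms. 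That is presentational, not structural.

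The one genuine misstep is at the end of \eqref{seconduict}. You pass from $\int_J\min(H(R),H(R''))\,dx \le |J|\cdot H(R'') \lesssim |U|^{-1}(H(R)+H(R''))H(R'')$ to the claimed bound by asserting $H(R)+H(R'')\lesssim H(R)$. This need not hold: Lemma \ref{uprops} only gives $5|U(R)|\le |U|$, so $H(R)=|U(R)||I(R)|/2$ can be far smaller than $|U||I|\gtrsim H(R'')$; and your proposed rescue via the selection order fails because $U(R)\cap U(R'')=\emptyset$ for pairs in $\calp'$, so Lemma \ref{7rlemma} is unavailable. Fortunately the detour is unnecessary: keep the $\min$ from your own display and use $\bigl(H(R)+H(R'')\bigr)\min(H(R),H(R''))\le 2H(R)H(R'')$, which gives $|R_I\cap R''|\lesssim |U|^{-1}H(R)H(R'')$ with no comparison of the two heights. (The paper avoids the issue entirely by fibering the intersection over $y$: for each of the $H(R)$ admissible heights the $x$-fiber has length $H(R'')u_0^{-1}$, giving area $H(R)H(R'')u_0^{-1}$ directly.) With that one-line repair your argument is complete.
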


Proof: By a shearing transformation and translation we may assume that 
the central line segment of $R$ is on the $x$ axis.

Statement (\ref{firstuict}) follows immediately from the central slope
of $R''$ being less than $10|U|$
and  $ H(R'')\le 20 |U||I|$, and hence the vertical distance of 
any point in $R''$ from $R$ is at most $50|U||I|$.
To see the second statement, note that 
the central slope $u_0$ of $R''$ is at least $2|U|$.
Hence (\ref{seconduict}) follows because $R\cap R''$ is contained in a
parallelogram of height $H(R)$ and base $H(R'')u_0^{-1}$.
This proves Lemma \ref{strombergct}. \endproof

\begin{lemma}\label{nosmallct}
Let $I$ be a dyadic interval contained in $I_{R'}$. If
$$\sum_{R''\in \calb(R'): I\subset I_{R''}} 
|R_I\cap R''|>10^{-1} |R_I|$$
then
there does not exist $R'''\in \calb(R')$ with
$I_{R'''}\subset I$, $I_{R'''}\neq I$.
\end{lemma}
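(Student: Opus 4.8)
The plan is to argue by contradiction. Suppose $R''' \in \calb(R')$ satisfies $I_{R'''} \subsetneq I$ with $I$ a dyadic interval inside $I_{R'}$. The key structural fact to exploit is that every $R'' \in \calb(R')$ has $U(R'') \subset U$ by \eqref{fourthuct} of Lemma \ref{uprops}, where $U$ is a fixed interval of slopes with $5|U(R)| \le |U|$ and $U(R) \cap 5U = \emptyset$, $U(R) \subset 6U$. After the shearing normalization of Lemma \ref{strombergct} putting the central line of $R$ on the $x$-axis, the slopes in $U$ are all bounded below in absolute value by roughly $2|U|$ (this is the content of $U(R)\cap 5U = \emptyset$ together with $U(R)$ being near zero), so each $R''$ is ``steep'' relative to $R$: its intersection with $R_I$ lives in a thin parallelogram of height $H(R)$ and horizontal base $\sim H(R'')|U|^{-1}$, as already recorded in \eqref{seconduict}.

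First I would observe that the hypothesis $\sum_{R''\in\calb(R'):\, I\subset I_{R''}} |R_I\cap R''| > 10^{-1}|R_I|$ forces the collection of $R''$ with $I \subset I_{R''}$ to cover a definite fraction of $R_I$; in particular, summing \eqref{seconduict} over these $R''$, $\sum H(R'') \gtrsim |U| H(R) \cdot (\text{number})^{-1}$ — more usefully, I would control the total \emph{vertical} mass: since each such $R''$ is a parallelogram through $R_I$ of height $H(R'')$ and the $R''$ with $I\subset I_{R''}$ have shadows \emph{containing} $I$, restricting to the vertical strip over $I$ their traces are genuine sub-parallelograms of $R_I$ with heights $H(R'')$, hence $\sum_{I\subset I_{R''}} H(R'') \cdot |I| \gtrsim$ the covered portion of $|R_I| = H(R)|I|$ only after accounting for the steepness shrink factor, giving $\sum_{I\subset I_{R''}} H(R'') \gtrsim |U||I|$ up to constants. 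In particular at least one such $R''_0$ has $H(R''_0) \gtrsim |U||I| / (\#)$, but more to the point the total height of these steep traces over the short interval $I_{R'''} \subset I$ is still a definite fraction, since the traces of the $R''$ (having shadows $\supset I \supset I_{R'''}$) over $I_{R'''}$ have the same heights $H(R'')$. Then $R'''$, whose shadow is the \emph{short} interval $I_{R'''}$ and whose slopes also lie in $U$, would be forced by the $\calb(R')$-selection mechanism — recall $R'''$ survived into $\calb(R')$ only if it was not removed, i.e. not contained in $\{M_V(\sum_{\calg}1_R) \ge 10^{-3}\}$ — yet the vertical maximal function of $\sum_{R''} 1_{R''}$ restricted over $I_{R'''}$ is already $\gtrsim$ a constant exceeding $10^{-3}$ at points of $R'''$, because the steep traces stack up to total height $\gtrsim |U||I| \ge |U||I_{R'''}| \gg H(R''')$ when $I_{R'''} \subsetneq I$ and $H(R''') \le 20|U||I_{R'''}|$ from the $\calr_i$-definition. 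Wait — this needs the $R''$ to actually be among the $\calg$-selected parallelograms, which they are not directly; instead I would run the argument purely inside $\calb(R')$: the relevant mechanism is that $R'''$ being selected \emph{later} than all larger-shadow $R''$ in the $\calg'$-construction, and $\Pi E(R''') \cap \Pi E(R') \ne \emptyset$, plus the nesting $I_{R'''} \subset I \subset I_{R''}$, should contradict the disjointness or maximality built into how $\calb(R')$ was formed.

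The cleaner route — and the one I would actually write — is: assume such $R'''$ exists, apply Lemma \ref{strombergct} with the dyadic interval taken to be (a dyadic interval comparable to) $I_{R'''}$ to locate all the $R''$ with $I \subset I_{R''}$ inside a fixed dilate $C(1 + |U||I_{R'''}|H(R)^{-1})R$ over $I_{R'''}$, then observe that the hypothesis gives these $R''$ covering $\gtrsim |R_{I_{R'''}}|$ worth of area over $I_{R'''}$ (the portion of $R_I$ they cover, intersected with the strip over $I_{R'''}$, still has measure $\gtrsim H(R)|I_{R'''}|$ by a pigeonhole/averaging over sub-intervals of $I$). Since each contributes a vertical slab of height $H(R'')$ and these slabs all pass over $I_{R'''}$, the vertical maximal function $M_V(\sum_{R''} 1_{R''})$ at a generic point of $R_{I_{R'''}}$ is $\gtrsim 1 \gg 10^{-3}$. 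Now $R'''$ intersects $R_{I_{R'''}}$ (by construction of $\calb(R')$ both meet $E(R')$ hence $R \cap R''' \ne \emptyset$ after the usual Lemma \ref{7rlemma} argument, and the shadows nest), and $R'''$ has small height $H(R''') \le 20|U||I_{R'''}|$ — here is precisely where $I_{R'''} \ne I$, i.e. $|I_{R'''}| \le |I|/2$, is used to make $H(R''')$ genuinely small compared to the stacked height $\sim |U||I|$ of the $R''$-slabs, so that a \emph{vertical} neighborhood of $R'''$ of size $H(R''')$ still sees maximal function $\gtrsim 1$. But the $R''$ that stack up are precisely sub-collections that were \emph{not} removed when building $\calb(R')$...

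I will stop the recursion of self-correction: the honest statement of the main obstacle is that one must carefully track which auxiliary collection's selection rule is being contradicted. The $R''$, $R'''$ all live in $\calb(R')$, whose members were \emph{removed} from $STOCK$ during the $\calg'$-construction inside the proof of Lemma \ref{rct}; the operative exclusion there is membership-based on $\Pi E$-overlap, not on $M_V$. So the contradiction I should extract is geometric: the $R''$ with $I \subset I_{R''}$ covering $>10^{-1}|R_I|$, combined with Lemma \ref{strombergct}'s localization \eqref{firstuict} and the area bound \eqref{seconduict}, pins enough mass into a bounded dilate of $R$ over $I_{R'''}$ that no further parallelogram $R'''$ with the tiny shadow $I_{R'''}$ and slope in $U$ can meet $R_I$ without violating \eqref{seconduict}'s budget — i.e., the total $\sum_{I\subset I_{R''}}|R_I \cap R''| \le 10|U|^{-1}H(R)\sum H(R'')$, and re-localizing everything to the strip over $I_{R'''}$ together with the presence of $R'''$ would push the count past $|R_I|$, the same quantity from the hypothesis. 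The hard part, which I expect to occupy most of the write-up, is bookkeeping the constants so that the ``$>10^{-1}$'' from the hypothesis and the absolute constants $50$, $10$, $20$ from Lemma \ref{strombergct} combine to a genuine contradiction, and verifying that $R'''$ indeed meets $R_I$ (not merely $R$) — which follows from $I_{R'''} \subset I \subset I_{R'}$ and the chain of $E$-intersections via Lemma \ref{7rlemma}, as in the earlier lemmas.
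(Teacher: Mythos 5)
You circle the correct argument and then abandon it for the wrong reason. The decisive point is that every element of $\calb(R')$ is already an element of $\calg$: by construction $\calb(R')\subset\{R'':(R,R'')\in\calp'\}$ and $\calp'\subset\calp\subset\calg\times\calg$. Your parenthetical objection --- ``this needs the $R''$ to actually be among the $\calg$-selected parallelograms, which they are not directly'' --- is false, and it is precisely what pushes you off the right track. Since selection into $\calg$ proceeds in decreasing order of shadow length, the hypothetical $R'''$ (with $I_{R'''}\subsetneq I\subset I_{R''}$) was selected into $\calg$ \emph{after} $R$ and after every $R''$ appearing in the sum; so it suffices to show that $M_V\bigl(1_R+\sum_{R''\in\calb(R'):\,I\subset I_{R''}}1_{R''}\bigr)>10^{-3}$ on $R'''$, which contradicts the removal rule in the construction of $\calg$ in the proof of Theorem \ref{laceylict} --- not anything about the $\calg'$/$\calb(R')$ procedure of Lemma \ref{rct}, whose $\Pi E$-overlap rule is indeed irrelevant here. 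This is exactly what the paper does: \eqref{firstuict} places $R'''$ and the relevant traces $R''_I$ inside $\tilde R=50(1+|U||I|H(R)^{-1})R$, \eqref{seconduict} converts the hypothesis $\sum|R_I\cap R''|>10^{-1}|R_I|$ into $\sum H(R'')\gtrsim |U||I|$, and dividing by $H(\tilde R)\sim H(R)+|U||I|$ yields the lower bound $500^{-1}>10^{-3}$. Your stacked-height computation is essentially this; the missing idea is only the (correct) identification of which selection rule is being violated.

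Two further gaps. First, your final fallback --- the purely ``geometric budget'' contradiction claiming that the presence of $R'''$ would ``push the count past $|R_I|$'' --- is not an argument: the hypothesis is a \emph{lower} bound $>10^{-1}|R_I|$, and there is no a priori upper bound of $|R_I|$ to violate (the bound $2|R_I|$ of Lemma \ref{iequalirct} is itself a consequence of the maximal-function argument, not an input available here). Second, Lemma \ref{strombergct} applies only to those $R''$ with $H(R'')\le 20|U||I|$, and you never treat the complementary case. The paper splits accordingly: if some $R''$ with $I\subset I_{R''}$ and $R_I\cap R''\neq\emptyset$ has $H(R'')\ge 20|U||I|$, then that single tall parallelogram together with $R$ already forces $M_V(1_{R''}+1_R)>10^{-3}$ on $\tilde R_I$ via Lemma \ref{7rlemma}. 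Without this case distinction your stacking estimate is incomplete even once the right contradiction is targeted.
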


\begin{proof} 
For every $R'''\in \calb(R')$ we have $U(R''')\subset U$ and thus
$$ H(R''') \le 10 U |I_{R'''}| \ .$$
Hence if $I_{R'''}\subset I$ then $H(R''')\le 20 U|I|$.
The parallelogram $R'''$ has been selected for $\calg$
after the parallelogram $R$ and the parallelograms $R''\in \calb(R')$ with
$I\subset I_{R''}$.
By Lemma \ref{strombergct} it suffices to show that the maximal 
function 
$$M_V(1_R+
\sum_{R''\in \calb(R'): I\subset I_{R''}} 
1_{R''})$$
is larger than $10^{-3}$ on the parallelogram
$$\tilde{R}:=50(1+{|U||I|}H(R)^{-1}) R\ \ .$$
First assume there exists $R''\in \calb(R')$ with
$I\subset I_{R''}$ and 
$R_I\cap R''\neq \emptyset$
and $H(R'')\ge 20|U||I|$. Note that $U(R'')$ 
and $U(\tilde{R})$ have non-empty intersection because
$U(R'')\subset U\subset U(\tilde{R})$.
Applying Lemma \ref{7rlemma} to the rectangles 
$R''_I$ and $\tilde{R}_{I}$ we obtain similarly
as before 
$$M_V (1_{R''}+1_R)\ge 7^{-1}H(\tilde{R})^{-1} (\min(H(R''),H(\tilde{R}))+H({R})> 10^{-3}$$
on $\tilde{R}_I$, which proves Lemma \ref{nosmallct} in the given case.

Hence we may assume 
$$H(R'')\le 20 |U||I|$$ for every $R''\in \calb(R')$
with $I\subset I_{R''}$ and $R_I\cap R''\neq \emptyset$.
We then have on $\tilde{R}_I$ by Lemma \ref{strombergct}
\beqa M_V (1_R   + \sum_{R''\in \calb(R'): I\subset I_{R''} }\one_{R''})&\ge &
H(\tilde{R})^{-1} (H(R)+\sum_{R''\in \calb(R'): I\subset I_{R''} } H(R''))\eeqa
\beqa &\ge & H(\tilde{R})^{-1} (H(R)+\sum_{R''\in \calb(R'): I\subset I_{R''} } 
|R_I\cap R''| |U| H(R)^{-1})\\
&\ge&  H(\tilde{R}) (H(R)+|U| H(R)^{-1} 10^{-1} |R_I|)\ge 500^{-1} \ \ .
\eeqa
This completes the proof of Lemma \ref{nosmallct}. \end{proof}

Note that we have used the hypothesis $I_{R'''}\neq I$ of Lemma (\ref{nosmallct})
only to conclude that $R'''$ has been selected last to $\calg$. Consider the
collection of all $R''\in \calb(R')$ with $I= I_{R''}$ and let $R'''$ the parallelogram
chosen last in this collection. Since 
$|R_I\cap R'''|\le |R_I|$, the proof  of  the previous
lemma also gives 
\begin{lemma}\label{iequalirct}
We have for every $I\subset I_{R'}$ 
$$\sum_{R''\in \calb(R'):I= I_{R''}} |R_I\cap R''|\le 2 |R_I|\ \ .$$
\end{lemma}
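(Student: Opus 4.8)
The plan is to argue by contradiction, reusing the machinery of Lemma \ref{nosmallct} almost verbatim, the only new input being the remark immediately preceding the statement: the hypothesis $I_{R'''}\neq I$ in Lemma \ref{nosmallct} was invoked solely to guarantee that $R'''$ was the last parallelogram among the relevant ones to be selected for $\calg$. So suppose, for contradiction, that
\[
\sum_{R''\in \calb(R'):I= I_{R''}} |R_I\cap R''| > 2 |R_I|\ .
\]
Let $R'''$ be the parallelogram chosen last for $\calg$ among all $R''\in\calb(R')$ with $I_{R''}=I$. Since $|R_I\cap R'''|\le |R_I|$, removing the single term $R'''$ still leaves
\[
\sum_{\substack{R''\in \calb(R'):\, I= I_{R''}\\ R''\neq R'''}} |R_I\cap R''| > 2|R_I|-|R_I| = |R_I| \ge 10^{-1}|R_I|\ ,
\]
and each such $R''$ satisfies $I\subset I_{R''}$ (with equality) and was selected before $R'''$.

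Now I would run the argument in the proof of Lemma \ref{nosmallct} with the collection $\{R''\in\calb(R'):I=I_{R''},\ R''\neq R'''\}$ playing the role of $\{R''\in\calb(R'):I\subset I_{R''}\}$, and with $R'''$ in the role of the parallelogram chosen last. First, since $U(R''')\subset U$ (as $R'''\in\calb(R')$) we get $H(R''')\le 10|U||I_{R'''}|=10|U||I|\le 20|U||I|$, so Lemma \ref{strombergct}, applied as in Lemma \ref{nosmallct}, gives $R'''_I\subset \tilde R_I$ with $\tilde R:=50(1+|U||I|H(R)^{-1})R$. Exactly the computation in Lemma \ref{nosmallct} (splitting according to whether some $R''$ with $I=I_{R''}$ has height $\ge 20|U||I|$ or all such heights are $\le 20|U||I|$, and in the latter case using (\ref{seconduict}) to convert $|R_I\cap R''|$ into $H(R'')$) then shows
\[
M_V\Bigl(1_R+\sum_{\substack{R''\in\calb(R'):\,I=I_{R''}\\ R''\neq R'''}}\one_{R''}\Bigr) > 10^{-3}
\]
on $\tilde R_I$, hence on $R'''_I$.

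But $R'''$ was selected for $\calg$ after $R$ and after all the $R''$ appearing in the last display, so at the moment $R'''$ is examined it lies in $\{x: M_V(\sum_{R\in\calg}1_R)(x)\ge 10^{-3}\}$ and would have been discarded into $\calb$ rather than placed in $\calg$. This contradicts $R'''\in\calb(R')\subset\calg$, and the lemma follows. I do not anticipate a genuine obstacle here; the one point to keep honest is the bookkeeping of constants — the factor $2$ on the right-hand side is exactly what is needed so that, after discarding the single term $R'''$ (whose contribution is at most $|R_I|$), the remaining sum still exceeds the threshold $10^{-1}|R_I|$ required to trigger the maximal-function estimate of Lemma \ref{nosmallct}.
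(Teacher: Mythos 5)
Your proposal is correct and follows exactly the route the paper itself takes: the paper proves Lemma \ref{iequalirct} by observing that the hypothesis $I_{R'''}\neq I$ in Lemma \ref{nosmallct} was used only to ensure $R'''$ was selected last, and then reruns that proof with $R'''$ taken to be the last-selected element of $\{R''\in\calb(R'):I=I_{R''}\}$, using $|R_I\cap R'''|\le |R_I|$ to keep the remaining sum above the threshold. Your write-up fills in precisely these details (the height bound $H(R''')\le 20|U||I|$, the containment $R'''=R'''_I\subset\tilde R_I$, and the contradiction with $R'''\in\calg$), so there is nothing to add.
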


Now let $\cali$ be the set of maximal dyadic intervals contained in $I_{R'}$
such that 
$$\sum_{R''\in \calb(R'):I\subset I_{R''}} |R_I\cap R''|> 2 |R_I|\ .$$
By Lemma \ref{iequalirct} we have $I_{R'}\notin \cali$.
Let $I\in \cali$ and denote the parent of $I$ by $\tilde{I}$.
By Lemma \ref{nosmallct} and by maximality of $I$ and Lemma \ref{iequalirct} we have
\beqa \sum_{R''\in \calb(R')} |R_I\cap R''| &=&
\sum_{R''\in \calb(R'):\tilde{I}\subset I_{R''}} |R_I\cap R''|
+\sum_{R''\in \calb(R'):I=I_{R''}} |R_I\cap R''|\\
&\le &  2 |R_{\tilde{I}}|+  2 |R_I|\le 6 |R_I|\ \ .
\eeqa
By adding over all $I\in \cali$ we obtain
\beq \label{calict}
\sum_{I\in \cali'} 
\sum_{R''\in \calb(R')} |R_I\cap R''| 
\le 6 |R_{I(R')}|\ \ .\eeq

Now let $\cali'$ be the set of maximal dyadic intervals which are contained in
$I_{R'}$, disjoint from any interval in $ \cali$,  and do not contain any
$I(R'')$ with $R''\in \calr(R')$. By construction of $\cali$ 
we have for each  $I\in \cali'$ 
\beqa \sum_{R''\in \calr(R')} |R_I\cap R''|&=&
\sum_{R''\in \calr(R'): I\subset I_{R''}} |R_I\cap R''|\le  2 |R_I|\ \ .
\eeqa
Summing over all intervals in $\cali'$ gives
\beq
\sum_{I\in \cali'} 
\sum_{R''\in \calr(R')} |R_I\cap R''|
\le 2 |R_{(I(R')}|\ \ .
\eeq
Together with (\ref{calict})
this completes the proof of Lemma \ref{rct}, because $\cali$ and $\cali'$
form a partition of $I(R')$.


\section{Outline of the proof of Theorem \ref{firstrefinement}} \label{outline}

Recall that we need to prove for each $|k|\le k_0$ the inequality
\beqan\label{recfirstrefin}
|\langle H_{k,G,H} \one _F , \one _E \rangle| \lesssim
\left( {{ |G| } \over {|H|} }\right) ^{{1\over 2}-{1 \over p}}  
|F|^{{1\over 2} } |E|^{{1\over 2}}\ \ .\eeqan
We assume without loss of generality that $E\subset G$ and $F\subset H$.
Recall also that Theorem \ref{companion} implies for  $1<q<\infty$:
\beqan \label{b2result}
|\langle H_{k} \one _{F} , \one _{E} \rangle| 
&\lesssim &   
\left( {{ |E| } \over {|F|} }\right) ^{{1\over 2}-{1 \over q}}  
|F |^{{1\over 2} } |E |^{{1\over 2}}
\ \ .
\eeqan
The left hand sides of (\ref{recfirstrefin}) and (\ref{b2result}) are identical.
Hence our task is to strengthen the proof of Theorem \ref{companion} in \cite{B2} 
in case the factor involving $G$ and $H$ in (\ref{recfirstrefin}) is less
than the corresponding factor involving $E$ and $F$ in (\ref{b2result}).

We recall some details about the proof in \cite{B2}.
The form $\langle H_k \one_{F}, \one_{E}\rangle$ is written 
as a linear combination of a bounded number of model forms
$$\sum_{s\in \calu_k}\langle {\bf C}_{s,k} \one_{F}, \one_{E}\rangle\ \ ,$$
where the index set $\calu_k$ is a set of parallelograms with vertical edges
and constant height (depending on $k$). The paper proves the bound analogous to (\ref{b2result})
 for the absolute sum 
\beq \label{modelsumb}
\sum_{s\in \calu'_k}|\langle {\bf C}_{s,k} \one_{F}, \one_{E}\rangle|\ \ ,
\eeq
where $\calu'_k$ is an arbitrary finite subset of $\calu_k$ and the bound is independent
of the choice of subset, which may be assumed to only account for non-zero summands.

To estimate (\ref{modelsumb}), one first proves estimates for the sum over
certain subsets of $\calu'_k$ called trees. Each tree $T$ is assigned a parallelogram ${\topp}(T)$.
It is also assigned a density $\delta(T)$ which measures the contribution of $E$ to the tree,
and a size $\sigma(T)$ which measures the contribution of $F$ to the tree.
One obtains for each tree $T$:
$$\sum_{s\in T}|\langle {\bf C}_s \one_{F} , \one_{E}\rangle|
\lesssim \delta (T)\sigma (T) |{\topp}(T)|\ \ .$$

The collection $\calu'_k$ is then written as a disjoint union of sub-collections  $\calu_{\delta,\sigma}$ 
where $\delta$ and $\sigma$ run through the set of integer powers of two.
Each $\calu_{\delta,\sigma}$ is written as a disjoint union of a collection $\calt_{\delta,\sigma}$ of
trees with density at most $\delta$ and size at most $\sigma$.
With the above tree estimate it remains  to estimate
$\sum_{\delta,\sigma} S_{\delta,\sigma}$ with
$$S_{\delta,\sigma}:=\sum_{T\in \calt_{\delta,\sigma}} \delta \sigma |{\topp}(T)|\ \ .$$
We list the estimates on $S_{\delta,\sigma}$ used in \cite{B2}; note that we include an additional factor 
of $\delta\sigma$ relative to the corresponding expressions in \cite{B2}.

\begin{estimate}[Orthogonality]\label{orthogonalityct}
$S_{\delta,\sigma}\lesssim {|F| \delta \sigma^{-1}}\ \ .$
\end{estimate}

\begin{estimate}[Density]\label{densityct}
$S_{\delta,\sigma}\lesssim {|E|\sigma}\ \ .$
\end{estimate}

\begin{estimate}[Maximal]\label{maximalct}
For any $\epsilon>0$,
$S_{\delta,\sigma}\lesssim {|F|^{1-\epsilon}|E|^{\epsilon}
\sigma^{-\epsilon}}\ \ .$
\end{estimate}

\begin{estimate}[Trivial density restriction]\label{trivialdensity}
If $\delta>1$, then
$S_{\delta,\sigma}=0\ \ .$
\end{estimate}

\begin{estimate}[Trivial size restriction]\label{trivialsize}
There is a universal $\sigma_0$ such that if $\sigma>\sigma_0$, then
$S_{\delta,\sigma}=0\ \ .$
\end{estimate}

Our improvement comes through two additional estimates depending 
on $G$ and $H$ that will be proved in Section \ref{fillingdetails}.

\begin{estimate}[Second maximal]\label{secondmaximal}
If $p<2$ and $G$, $H$ are as in Theorem \ref{firstrefinement}, then
for every $\epsilon>0$
$$S_{\delta,\sigma}
\lesssim |E| \left({|H|\over |G|}\right)^{{1\over 2}} 
\sigma^{-\epsilon} \delta^{-{\frac 12}-
\epsilon}\ \ .$$
\end{estimate}

\begin{estimate}[Size restriction]\label{sizerestriction}
Let $p>2$ and $G$, $H$ as in Theorem \ref{firstrefinement}.
Let $n>2$ be a large integer and $\alpha=1/n$ and $C_\alpha$ be some constant.
Then there is a constant $\sigma_1$ such that if 
$$\sigma \ge \sigma_1 \left({\tilde{\delta} \over \delta}\right)^n$$
with 
$$\tilde{\delta}=C_{\alpha} \left({|G|\over |H|}\right)^{1-\alpha}\ \ ,$$
then we have
$S_{\delta,\sigma}=0\ \ .$
\end{estimate}

To obtain summability for small $\sigma$, it is convenient to take
weighted geometric averages of Estimates \ref{orthogonalityct}, 
\ref{maximalct}, and \ref{secondmaximal} with Estimate \ref{densityct} to
obtain positive powers of $\sigma$. We record these modified estimates,
where we simplify exponents using that we may assume universal upper bounds on
$\delta$ and $\sigma$. We have for any $\epsilon>0$:

\begin{estimate}[Modified Orthogonality]\label{modorthogonalityct} $S_{\delta,\sigma}\lesssim {|E|^{\frac 12 +\epsilon }
|F|^{\frac 12 -\epsilon} \delta^{\frac 12   -\epsilon} \sigma^{2\epsilon}}\ \ .$
\end{estimate}

\begin{estimate}[Modified maximal]\label{modmaximalct}
$S_{\delta,\sigma}\lesssim {|F|^{1-4\epsilon}|E|^{4\epsilon}
\sigma^{\epsilon}}\ \ .$
\end{estimate}

\begin{estimate}[Modified Second maximal]\label{modsecondmaximal}
Under the assumptions of Estimate \ref{secondmaximal},
$$S_{\delta,\sigma}
\lesssim |E| \left({|H|\over |G|}\right)^{{1\over 2}-\epsilon} 
\sigma^{\epsilon} \delta^{- {1 \over 2}}\ \ .$$
\end{estimate}

In the rest of this section we show how these estimates 
are used to estimate $\sum_{\delta,\sigma} S_{\delta,\sigma}$ and thereby complete 
the proof of Theorem \ref{firstrefinement}.

\subsection{Case $p<2$ and $|H|\le |G|$.}
Inequality (\ref{recfirstrefin}) for  ${3\over 2}<p<2$
follows from inequality (\ref{b2result}) for $1<q<2$ unless
\beqan\label{efgh1}
\left({|H |\over|G|}\right)^{1\over 3}\le
{|F|\over|E|}\ \ ,
\eeqan
which we shall therefore assume.

Pick $\epsilon>0$ small compared to the distance of $p$ to $\frac 32$. 
We split the sum over $\delta$ at 
$$\delta_0=\left( {|H| \over |G|}{|E| \over |F|}\right)^{1\over 2}\ .$$
For $\delta\le \delta_0$ we use Estimate \ref{modorthogonalityct} together
with Estimate \ref{trivialsize} to obtain
\beqa \sum_{\delta\le \delta_0}\sum_{\sigma}  S_{\delta,\sigma}
&\lesssim & \delta_0^{{1\over 2}-\epsilon} 
|E|^{{1\over 2}+\epsilon}|F|^{{1\over 2}-{\epsilon}}
= 
|E|^{{3\over 4}+{\epsilon\over 2}}|F|^{{1\over 4}-{\epsilon\over 2}}
\left({|H|\over |G|}\right)^{{1 \over 4}-{\epsilon \over 2}}\ \ .
\eeqa
For $\delta\ge \delta_0$ we use Estimate \ref{modsecondmaximal} together
with Estimate \ref{trivialsize} to obtain
\beqa \sum_{\delta\ge \delta_0}\sum_{\sigma }  S_{\delta,\sigma}
&\lesssim &  
\delta_0^{-{1\over 2}} |E|\left({|H|\over |G|}\right)^{\frac 12 -\epsilon}
=  
 |E|^{{3\over 4} }|F|^{{1\over 4}} 
\left({|H|\over |G|}\right)^{{1\over 4}-\epsilon}\ \ .\\
\eeqa
Using (\ref{efgh1}) and $|H|\le |G|$ we may estimate both partial sums by
$$\lesssim |E|^{1\over 2}|F|^{1\over 2}\left({|H|\over |G|}\right)^{{1\over 6}-3\epsilon}\ .$$
This completes the proof of (\ref{recfirstrefin}) in case $p<2$.

\subsection{Case $p>2$ and $|G|\le |H|$.}

Pick $\epsilon$ very small compared to $\frac 1p$.
Inequality (\ref{recfirstrefin}) for  $2<p<\infty$
follows from inequality (\ref{b2result}) unless
\beqan\label{efgh2}
{|G|\over |H|}
\le 
\left({|E|\over |F|}\right)^{1+\epsilon}
\ \ ,
\eeqan
which we shall therefore assume.
Let $\alpha$ and $1/n$ be very small compared to $\epsilon$, let $C_\alpha$
be as in the construction of the set $H$ and let $\tilde{\delta}$ be as in 
Estimate \ref{sizerestriction}.
We split the sum over $\delta$ at 
$$\delta_1:=\tilde{\delta} \left({1\over \tilde{\delta}} {|E|\over |F|}\right)^{1\over n}\ \ .$$

For $\delta\le \delta_1$ we use 
a weighted geometric mean of Estimates
 \ref{modorthogonalityct} and \ref{modmaximalct} 
together with Estimate \ref{trivialsize} to obtain
\beqa \sum_{\delta\le \delta_1}\sum_{\sigma} S_{\delta,\sigma}
&\lesssim & \delta_1^{{1\over 2}-4\epsilon} 
|E|^{{1\over 2}-\epsilon}|F|^{{1\over 2}+{\epsilon}}
\\
&\lesssim & 
\tilde{\delta}^{(1-{1\over n})({1\over 2}-4\epsilon)} |E|^{1\over 2}|F|^{{1\over 2}}
\left({|G|\over |H|}\right)^{-2\epsilon}\ \ ,
\eeqa
where in the last line we have used (\ref{efgh2}) and $|G|\le |H|$. Using the definition
of $\tilde{\delta}$ in Estimate \ref{sizerestriction} we may estimate the last display by
\beq
\lesssim  
|E|^{1\over 2}|F|^{{1\over 2}}
\left({|G|\over |H|}\right)^{{1\over 2}-10\epsilon}\ \ .
\eeq
For $\delta\ge \delta_1$ we use 
Estimate \ref{densityct} together with
Estimate \ref{sizerestriction} to obtain
\beqa \sum_{\delta\ge \delta_1}\sum_{\sigma }  S_{\delta,\sigma}
&\lesssim & \sum_{\delta\ge \delta_1}  (\tilde{\delta}/\delta)^n |E|
\lesssim  (\tilde{\delta}/\delta_1)^n |E|\\
&\lesssim&   \tilde{\delta}  |F| \lesssim   
|F|^{1\over 2}|E|^{1\over 2} \left({|G|\over |H|}\right)^{{1\over 2}-10\epsilon}\ \ .
\eeqa
where in the last line we have used (\ref{efgh2}) and 
$|G|\le |H|$.
This completes the proof of (\ref{recfirstrefin}) in case $p>2$.


\section{Proof of the additional Estimates \ref{secondmaximal} and \ref{sizerestriction}}\label{fillingdetails}

In this section we deviate from the notation in Section \ref{constructsets} as follows: for a
parallelogram $R$ we denote by $CR$ the isotropically scaled
parallelogram with the same center 
and slope as $R$ but with height $H(CR)=CH(R)$ and shadow $I(CR)=CI(R)$.

We say that a set is approximated by a parallelogram $R$, if it is contained in 
the parallelogram and the parallelogram has at most one hundred times the area
of the set. Any parallelogram $R$ can be approximated by a parallelogram
$R'$ with $I(R')\in \cali_1\cup \cali_2$ and both vertical edges
of $R'$ in $\cali_1\cup\cali_2$. To see this, first identify an interval 
$I$ in $\cali_1\cup \cali_2$ which contains $I(R)$ and has at most three times the length;
this interval $I$ will be the shadow of $R'$. Consider the extension of $R$ which has same 
central line and height as $R$ but shadow $I$. Then find two intervals in $\cali_1\cup \cali_2$ 
which have mutually equal length at most three times the height of $R$ and which contain
the respective vertical edges of the extended parallelogram. These intervals define
the vertical edges of $R'$.

We recall some details of the proof of Estimate \ref{densityct}
in \cite{B2}.
Given $\delta,\sigma$, one constructs a collection
$\calr_{\delta,\sigma}$ of parallelograms of the same height
as the parallelograms in $\calu'_k$ such that each
tree $T$ in $T_{\delta,\sigma}$ is assigned a parallelogram $R$ 
in $\calr_{\delta,\sigma}$ with $\topp(T)\subset C_0R $ and
$\topp(T')\subset C_0R $ for
every sub-tree $T'$ of $T$, for some constant $C_0$.
If $\calt(R)$ denotes the trees
in $\calt_{\delta,\sigma}$ which are assigned a given parallelogram $R\in\calr_{\delta,\sigma}$, then
we have 
$$\sum_{T\in \calt(R)}|\topp(T)|\le C_1|R|$$
for some constant $C_1$.
Estimate \ref{densityct} is then deduced from the inequality
\beq\label{rdisjoint}
\sum_{R\in \calr_{\sigma,\delta}}|R |\lesssim {|E|}{\delta^{-1}}\ \ .
\eeq
which follows essentially from pairwise incomparability of the parallelograms
in $\calr_{\delta,\sigma}$.  (In other words, if two parallelograms $P_1$, $P_2$ overlap, then they are pointed in different directions, resulting in disjointness of the sets $E(P_1)$ and $E(P_2)$.)  All parallelograms in $\calr_{\delta,\sigma}$
have height at least $2^{-k_0}$, length of shadow at least $2^{-k_0}$,
and slope at most $10^{-1}$.

Let $Q=[-N,N]^2$ be the large square with $N$ as in Lemma \ref{majorsubsetlemma}.
We claim that every set $Q\cap 2^k R$ with $R\in \calr_{\delta,\sigma}$ and $k\ge 0$ 
can be approximated by a parallelogram in $\calr_1\cup \calr_2$.
If $Q\cap 2^kR$ is a parallelogram then this is clear by the remarks above.
If $Q\cap 2^kR$ is not a parallelogram, then we first extend it to the 
minimal parallelogram containing it, which thanks to the 
bounded slope of $R$ is not much larger than $Q\cap 2^kR$, and then approximate
the extension by a parallelogram in $\calr_1\cup \calr_2$.

\subsection{Proof or Estimate \ref{secondmaximal}}

We partition $\calr_{\delta,\sigma}$ into subset $\calr_{\delta,\sigma,j}$ 
consisting of all parallelograms in $\calr_{\delta,\sigma}$ such that
$$C_12^{-j-1}|R|\le \sum_{T\in \calt(R)} |{\bf top}(T)|< C_1 2^{-j}|R|\ \ .$$
We claim that $\calr_{\delta,\sigma, j}$ is empty unless
$j$ satisfies $(\ref{sigmaj})$ below.
This claim together with (\ref{rdisjoint}) will prove Estimate \ref{secondmaximal}:
\beqa
S_{\delta,\sigma} & \lesssim & 
\delta \sigma 	\sum_{j_0\lesssim j } 
\sum_{\calr_{\delta, \sigma, j} }  2^{-j} |R| 
 \lesssim  \sum_{j_0\lesssim j }    2^{-j}{{|E|} {\sigma } }  \\
& \lesssim & |E|\sigma^{-\epsilon} \delta ^{-\frac 12 \epsilon } \left(\frac {|H|} {|G| }  \right)^{1\over 2} \ \ .
\eeqa

It remains to prove the claim.
Suppose there is a parallelogram $R$ in $R\in \calr_{\sigma,\delta,j}$. It has large 
density as defined and discussed in \cite{B2}, which implies that there is a $k\ge 0$ with
$$|E(2^k R)\cap G| \ge 2^{20k}\delta|2^kR|\ \ .$$
Since $G$ is contained in $Q$, we may approximate $Q\cap 2^kR$ by a 
parallelogram $R'$ of $\calr_1\cup \calr_2$ and obtain
\beqan\label{erprime}
|E(R')|\ge |E(R')\cap G| \gtrsim  2^{20k}\delta|R'|\ \ .
\eeqan

Now suppose first that $2^k\ge \sigma^{-\epsilon}$. 
By Claim 18 in \cite{B2}, and using that
$F\subset Q$, we obtain
$${|F\cap H \cap R'|\over |R'|}
\gtrsim 
{|F\cap H \cap 2^kR|\over |2^kR|}
\gtrsim 
2^{-2k} 2^{-j}\sigma^{1+\epsilon}\ \ .$$
On the other hand, (\ref{erprime}) implies in particular $R'\cap G\neq \emptyset$, which by construction of
$G$ (see Section \ref{constructsets}) implies, using $k\ge0$:
$$
2^{-2k} 2^{-j} \sigma ^{1+ \epsilon}\lesssim 
 (2^{20k}\delta) ^{-(\frac 12+\epsilon)} \left(\frac  {|H|} {|G|} \right) ^{1\over 2}  
\ \ ,$$
\beq \label{sigmaj} 2^{-j} \lesssim 2^{-j_0}:=\sigma ^{-1-\epsilon}
 \delta ^{-\frac 12 - \epsilon } \left( {{|H|} \over {|G|}}\right) ^{1\over 2}  \ \ .
\eeq
If $2^k\le \sigma^{-\epsilon}$ we use the variant
$${|F\cap H\cap \sigma^{-\epsilon}R|\over |\sigma^{-\epsilon} R|}\ge 2^{-j} \sigma^{1+3\epsilon}$$
of Claim 18 in \cite{B2} to obtain the same conclusion.

\subsection{Proof of Estimate \ref{sizerestriction}}

Note that by Estimates \ref{trivialdensity}
and $\ref{trivialsize}$ we may assume $C_0\tilde{\delta}\le  \delta$
with $C_0$ as above.
Suppose $T_{\delta,\sigma}$ is non-empty.
Consider a tree $T$ in $\calt_{\delta,\sigma}$
and let $R\in \calr_{\delta,\sigma}$ be the associated parallelogram
as above. As above we have for some $k\ge 0$:
$$|E(2^k R)\cap G|\ge 2^{20k}\delta |2^kR|\ \ .$$
Define $m$ so that $\delta$ is within a factor two of  $C_0^2 2^{m}\tilde{\delta}$
and note that $m\ge 0$.
Let $R'\in \calr_1\cup \calr_2$ be an
approximation of $Q\cap \max (2^k, C_0 2^m) R$. We then have
$$|E(R')\cap G|\ge \tilde{\delta} |R'|\ \ .$$
By construction, $R'$ is disjoint from $H$.
Since $\topp(T)$ is contained in $C_0R$, we have that 
$2^m \topp(T)$ is contained in $R'\cup Q^c$, and the same holds
with $T$ replaced by any sub-tree $T'$ of $T$.

But by Lemma 29 of \cite{B2} with $f=\one_{F\cap H}$, 
we obtain with the notation in that Lemma for every sub-tree $T'$ of $T$:
\beqa \sum_{s\in T'}|\langle f, \phi_s\rangle|^2 &=&
\sum_{m'\ge m}\sum_{s\in T'}|\langle f \one_{2^{m'+1}\topp(T')\setminus 2^{m'}\topp(T')}, \phi_s\rangle|^2\\
&\lesssim & \sum_{m'\ge m} 2^{-4n m'} \|f 1_{2^{m'+1}\topp(T')}\|_2^2\\
&\lesssim & 2^{-2n m} | \topp(T')|\ \ .
\eeqa
By the definition of $\sigma(T)$ this implies
$$\sigma(T)\le 2^{-n m}\ \ ,$$
which in turn implies Estimate \ref{sizerestriction}.


\bigskip

\footnotesize
\noindent\textit{Acknowledgments.}

The authors thank Ciprian Demeter for explaining some background for this problem and discussing
various approaches to this problem. The first author was partially supported by NSF grant DMS 0902490. 
The second author was partially supported by NSF grants DMS 0701302 and DMS 1001535.

\end{document}